\documentclass[12 pt,letterpaper,reqno]{amsart}
\usepackage{amssymb,latexsym,amsthm,amsmath,amsfonts, mathtools, multirow, longtable,comment}
\usepackage[hidelinks]{hyperref}
\DeclareFontFamily{U}{wncy}{}
\DeclareFontShape{U}{wncy}{m}{n}{<->wncyr10}{}
\DeclareSymbolFont{mcy}{U}{wncy}{m}{n}
\DeclareMathSymbol{\Sh}{\mathord}{mcy}{"58}

\newcommand{\genlegendre}[4]{%
	\genfrac{(}{)}{}{#1}{#3}{#4}%
	\if\relax\detokenize{#2}\relax\else_{\!#2}\fi
}
\newcommand{\legendre}[3][]{\genlegendre{}{#1}{#2}{#3}}

\usepackage{hyperref}
\hypersetup{
	colorlinks,
	citecolor=blue,
	filecolor=black,
	linkcolor=red,
	urlcolor=black
}

\usepackage{mathdots}

\long\def\symbolfootnote[#1]#2{\begingroup
	\def\thefootnote{\fnsymbol{footnote}}\footnote[#1]{#2}\endgroup}
\newcommand{\Z}{\mathbb Z}
\newcommand{\Q}{\mathbb Q}

\makeatletter
\def\imod#1{\allowbreak\mkern10mu({\operator@font mod}\,\,#1)}
\makeatother
\newtheorem{theorem}{Theorem}[section]
\newtheorem{lemma}[theorem]{Lemma}
\newtheorem{corollary}[theorem]{Corollary}
\newtheorem{proposition}[theorem]{Proposition}
\newtheorem*{theorem*}{Theorem}
\theoremstyle{definition}
\newtheorem{remark}[theorem]{Remark}

\numberwithin{equation}{section}
\newtheorem{notation}[theorem]{Notation}

\title[A necessary condition for a congruent number of the form $8k+3$]{A necessary condition for a congruent number of the form $8k+3$}

\author{Shamik Das, Sudipa Mondal}
\address{Shamik Das, shamik93das@gmail.com, Indian Institute of Technology Kanpur, India}
\address{Sudipa Mondal, sudipa.mondal123@gmail.com, Indian Institute of Technology, Madras (Chennai), India}

\begin{document}

\begin{abstract}
A positive square-free integer is called a \textit{congruent number} if it arises as the area of a right triangle with rational side lengths. Let $ n = p_1p_2 \cdots p_t q $ be a square-free integer, where each $ p_i \equiv 1 \pmod{8} $ and $ q \equiv 3 \pmod{8} $, with the $ p_i $ and $ q $ being distinct primes. In this article, we present a congruence relation modulo powers of 2 between the 2-part of the class numbers of $ \mathbb{Q}(\sqrt{-n}) $ and $ \mathbb{Q}(\sqrt{-p_1p_2 \cdots p_t}) $, under the assumption that $ n $ is a congruent number, using a modified R\'edei matrix.
 \end{abstract}
	
    \subjclass [2020]{Primary : 11G05, 11R29; Secondary : 11E04, 11A05}
    \keywords {congruent number, elliptic curve, class number, quadratic forms}

    \maketitle	
	
\section{Introduction}\label{intro}
\noindent A positive integer $n$ is a congruent number if there exists $a,b,c \in \Q^{\times}$ such that 
\begin{equation*}
    a^2+b^2 = c^2, \text{ and } n=\frac{1}{2}ab.
\end{equation*}
The \textit{congruent number problem} seeks a characterization of all congruent numbers and remains a challenging problem, as no general algorithm is currently known for its resolution. In 1640, Fermat demonstrated that 1 is not a congruent number, reducing the problem to identifying all square-free numbers that are congruent numbers. It is clear that  $n$  is a congruent number if and only if its square-free part is a congruent number. For a  square-free positive integer $n$, consider the elliptic curve
\begin{equation}
\label{cec}
E_{n}: y^2=x^3-n^2x.
\end{equation}
The solutions of equation (\ref{cec}) with $y=0$ are $\{(0,0),(n,0),(-n,0)\}$. Let $y \neq 0$. Then there is a one-to-one correspondence between the following two sets
: \begin{equation*}
    \{(a,b,c): a^2+b^2=c^2, n=\frac{1}{2}ab\}, \quad \{(x,y): y^2=x^3-n^2x,\; y\neq 0\}.
\end{equation*}
In fact, it can be concluded that $n$ is a congruent number if and only if the elliptic curve (\ref{cec}) has a point of \textit{infinite order}. The curve $E_n$ as in (\ref{cec}) is referred to as a \textit{congruent number elliptic curve}. The $\Q$-rational points $E_{n}(\mathbb{Q})$ is finitely generated abelian group due to Mordell-Weil theorem, that is, $E_n(\Q) \cong E_n(\Q)_{\mathrm{tors}} \oplus \mathbb{Z}^{r_n}$. The number $r_n$ is called the algebraic rank of $E_{n}(\mathbb{Q})$. The (weak) Birch and Swinnerton-Dyer conjecture predicts that $r_n$ and the order of vanishing  of the Hasse-Weil $L$-function $L(E_n,s)$ at $s=1$, denoted by $R(n)$ are the same. Based on the computations conducted by Birch and Stephens \cite{bs} we have that 
\begin{equation*}
R(n) \equiv
\begin{cases}
      0 \pmod 2 ~~\text{ if } & n \equiv 1,2,3 \pmod 8 \\
     1 \pmod 2 ~~ \text{ if } & n \equiv 5,6,7 \pmod 8.  
\end{cases}  
\end{equation*}
Consequently, the case where $ n \equiv 1, 2, 3 \pmod{8} $ becomes more intriguing, since under the assumption of the BSD conjecture, we lack predictions analogous to those for $ n \equiv 5, 6, 7 \pmod{8} $. Let $q_i$ be a prime number such that $q_i \equiv i \pmod 8$. Stephens \cite{stephens} proved that the primes $q_5, q_7$ are congruent numbers. Heegner \cite{heegner} and Birch \cite{birch} showed that $2q_3$ and $2q_7$ are congruent numbers. Genocchi \cite{genocchi} proved that $2q_5, q_3$ are non-congruent numbers. Lagrange \cite{lagrange} showed that $q_1q_3$ is a non-congruent number if $q_1$ is quadratic non-residue modulo $q_3$.  
Tunnell \cite{tunnell} used the theory of half-integral weight modular forms to provide a criterion for $q_1$ and $2q_1$ to be non-congruent numbers. \smallskip

For a square-free positive number $n$, we denote the ideal class group and the ideal class number of the imaginary quadratic number field $\Q(\sqrt{-n})$ by $\mathcal{C}(-n)$ and $h(-n)$, respectively. For $k\geq 1$, the $2^k$-rank of $\mathcal{C}(-n)$ is defined in (\ref{rankofclassgrp}). In \cite{qinli}, Qin and Li provide some conditions on the $8$-rank of the class group of the imaginary quadratic field 
$\Q(\sqrt{-n})$ with $n>0$ to produce families of non-congruent numbers.
Suppose $p$  and  $q$  are distinct primes such that $p \equiv 1 \pmod{16}$ and $q \equiv 3 \pmod{8}$, with  $q$  being a quadratic residue modulo $p$. Das \cite{Das} demonstrated that $pq$  is not a congruent number if $h(-2pq) \not\equiv p-1 \pmod{16}$. Qin \cite{qin} developed a novel Shimura lift approach and, based on this, established a new criterion for congruent numbers. As one of its applications, in the case modulo 8 with residue 3, \cite{qin} proved that if  $h(-pq) \not\equiv h(-p) \pmod{8}$, then $pq$ is not a congruent number, where $p$ and $q$ are primes with $p \equiv 1 \pmod{8}$ and $q \equiv 3 \pmod{8}$, as shown using the Minkowski--Siegel formula. In this article, the authors generalise this result of Qin for a square-free integer $n$ where $n=p_1p_2\cdots p_tq,\,t>0$ with $(p_i,q) \equiv (1, 3) \pmod 8$. Although the approach in this article is substantially different from the method used in \cite{qin}. Let $n=p_1p_2\cdots p_tq ~~(t > 0)$ be a square-free positive integer such that $p_i \equiv 1 \pmod8$ and $q \equiv 3 \pmod{8}$ are primes.
    \begin{enumerate}
        \item We denote $n_q:= \frac{n}{q}$.
        \item Define a matrix $A_n=[a_{i,j}]_{t \times t}$ by 
   \begin{equation}
   \label{matrixA}
   a_{i,j}=\begin{cases}
   	0	\text{ if } \legendre{p_j}{p_i}=1 \textit{ with } j\neq i\\
   	1	\text{ if } \legendre{p_j}{p_i}=-1 \textit{ with } j\neq i \end{cases} \text{ and } a_{i,i}=\sum_{j; j\neq i} a_{i,j},
    \end{equation}
    \end{enumerate}
where $\legendre{\cdot}{\cdot}$ is the Legendre symbol. For $t=1$, we can consider $A_n= [0]_{1 \times 1}$. The main theorem of this article is the following:
\begin{theorem}
\label{thmPq}
    Let $n=n_{q}q$ be defined as above. Assume the following: 
	\begin{enumerate}
		\item[(i)] $\legendre{q}{p_i}=1$ with $i \in \{1, 2, \ldots, t\}$,
		\item[(ii)] $\operatorname{rank} A_n=t-1$.
	\end{enumerate}	
	If $n$ is a congruent number, then we have 
	$	h(-n)\equiv h(-n_q) \pmod{2^{t+2}}$.
\end{theorem}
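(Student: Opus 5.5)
We argue by contrapositive: assume $h(-n)\not\equiv h(-n_q)\pmod{2^{t+2}}$ and show that the $2$-Selmer group of $E_n$ is as small as possible, so $r_n=0$. The proof has three steps: a $2$-descent on $E_n$, a genus/R\'edei computation of the $2$-power part of both class groups, and a comparison of the two class numbers modulo $2^{t+2}$.

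\textbf{Step 1: descent.} We use the standard $2$-isogeny descent on $E_n$ (the Monsky matrix formulation). Let $n=p_1\cdots p_tq$ with $p_i\equiv 1\pmod 8$, $q\equiv 3\pmod 8$ and $\legendre{q}{p_i}=1$. Here $\legendre{-1}{p_i}=\legendre{2}{p_i}=1$, while $\legendre{-1}{q}=-1$ and $\legendre{2}{q}=-1$. With these values the Monsky matrix becomes block-shaped. Its blocks are built from $A_n$ together with the row and column for $q$, and the $q$-entries vanish by hypothesis (i). The task here is to show that $\dim_{\mathbb F_2}\operatorname{Sel}_2(E_n)/E_n[2]$ equals a quantity governed by the kernel of $A_n$ and by certain ``higher'' quartic conditions. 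Hypothesis (ii) makes $\ker A_n$ one-dimensional over $\mathbb F_2$, spanned by $(1,\dots,1)$, since every row sum of $A_n$ is $0$. Consequently the Selmer rank is at most $2$, and the extra classes are exactly those attached to $d\in\{n_q, \text{ or }2n_q,\ldots\}$, i.e.\ to the divisor $n_q$ of $n$.

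\textbf{Step 2: class group structure.} Next we compute the $2$-rank, $4$-rank and higher ranks of $\mathcal C(-n)$ and $\mathcal C(-n_q)$ via R\'edei matrices. By genus theory, the $2$-rank of $\mathcal C(-n_q)$ is $t$ (the ramified primes are $p_1,\dots,p_t,2$), and the $2$-rank of $\mathcal C(-n)$ is also $t$ (ramified primes $p_1,\dots,p_t,q$). Their R\'edei matrices coincide with $A_n$ bordered by zero rows and columns. For the $q$-border this is exactly hypothesis (i); for the $2$-border it is because $p_i\equiv1\pmod 8$. Hence both $4$-ranks equal $t-\operatorname{rank}A_n=1$, so each $2$-class group has the shape $(\Z/2)^{t-1}\times \Z/2^{m}$ with $m\ge 2$, and similarly for $n_q$ with exponent $m'$. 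It follows that $h(-n)=2^{t-1+m}u$ and $h(-n_q)=2^{t-1+m'}u'$ with $u,u'$ odd. The congruence modulo $2^{t+2}$ is therefore a statement about whether $m,m'\ge 3$ (the $8$-ranks), together with the odd parts modulo $2$ or $4$ when $m$ or $m'$ equals $2$ or $3$.

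\textbf{Step 3: linking $8$-ranks to the descent (main obstacle).} We must show that if $n$ is congruent, the $8$-rank conditions, and when necessary the parity of $h/2^{t+1}$, for $-n$ and $-n_q$ agree. The plan is to use the explicit generator of the order-$4$ element: the kernel vector $(1,\dots,1)$ corresponds to the genus class of the ideal above $n_q$, or of $\mathfrak p_2$. The $8$-rank is then decided by a R\'edei-type quartic symbol, which we expect to be $\legendre{q}{n_q}_4$ times a $2$-adic factor for $-n$, and $\legendre{2}{n_q}_4\legendre{-1}{n_q}_4$-type symbols for $-n_q$. This is the hard part. We would write $n_q=a^2+b^2$ or a suitable norm representation, and express both quartic symbols via Burde's reciprocity.

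On the elliptic curve side, the second descent (Cassels–Tate pairing) on the Selmer elements from Step 1 yields precisely the product of these two symbols, and this product must be $+1$ if $r_n>0$. The reason is that $r_n$ is even here, so a congruent $n$ forces Selmer rank $2$ to survive the second descent. Matching these gives $m\ge3\iff m'\ge3$, with congruent odd parts modulo $4$ in the borderline case, and hence $h(-n)\equiv h(-n_q)\pmod{2^{t+2}}$.

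I expect the exact identification in Step 3, between the Cassels–Tate/second-descent obstruction and the ratio of the two R\'edei quartic symbols, to be the main difficulty. I would first test it at $t=1$ against Qin's result $h(-pq)\equiv h(-p)\pmod 8$.
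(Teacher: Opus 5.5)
Your outline has a genuine gap. Step 3 is the entire content of the theorem, and it is only announced (``we expect'', ``this is the hard part''), never carried out.

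\textbf{Step 3 is unproven on both sides.} On the class-group side the needed criteria are $r_8(-n)=1\iff\legendre[4]{q}{n_q}=1$ and $r_8(-n_q)=1\iff\legendre{-1}{e}=1$. The first comes from the R\'edei-type quadratic form on $V_0$, whose unique nontrivial element corresponds to $n_q$. In the second, $n_q=2e^2-f^2$; equivalently $v\equiv 0\pmod 4$ in $n_q=u^2+2v^2$. Your guess of a ``$\legendre[4]{2}{n_q}$-type'' symbol for $-n_q$ is not the right one. For $t=1$, $8\mid h(-p)$ means $p=x^2+32y^2$, whereas $\legendre[4]{2}{p}=1$ means $p=x^2+64y^2$. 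On the curve side you never compute the Cassels--Tate pairing of the two Selmer classes, so the claim that it equals the product of these symbols is unsupported.

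\textbf{Two smaller errors.} First, the remark about odd parts in Step 2 is a miscalculation. Since $m,m'\ge 2$, $h=2^{t-1+m}u$ is $\equiv 2^{t+1}$ or $\equiv 0\pmod{2^{t+2}}$ according as $m=2$ or $m\ge 3$, whatever the odd $u$ is. So the theorem is exactly $r_8(-n)=r_8(-n_q)$, i.e.\ $\legendre{-1}{e}=\legendre[4]{q}{n_q}$, with no condition on odd parts. Second, ``$r_n$ is even'' is not known unconditionally. What you actually need is that the pairing is alternating on the $2$-dimensional space of Selmer classes, and a rational point puts a nonzero vector in its radical, so the pairing vanishes.

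\textbf{How the paper closes the gap, without any second descent.} Sharpen Step 1 to $s_n=2$ with $K=\{(1,1),(n_q,1),(1,n_q),(n_q,n_q)\}$. A point of infinite order then gives a primitive integral solution of one of the three explicit descent systems for $(a,b)\in\{(1,n_q),(n_q,1),(n_q,n_q)\}$, and you evaluate the symbols directly on that solution.

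For $(1,n_q)$ the system reduces to $x^2+qy^2=n_qz^2$ and $x^2-qy^2=w^2$.
\begin{itemize}
\item Reducing mod $n_q$ and writing $y=2^{a_0}y_0$ with $\legendre{y_0}{n_q}=\legendre{2}{n_q}=1$ gives $\legendre{x}{n_q}=\legendre[4]{q}{n_q}$.
\item The sum $2x^2=n_qz^2+w^2$ gives $\legendre{n_q}{x}=\legendre{-1}{x}$.
\item Inserting $n_q=u^2+2v^2$ gives $2(x+vz)(x-vz)=u^2z^2+w^2$. This forces $x\pm vz\equiv 1\pmod 4$, hence $\legendre[4]{q}{n_q}=1\iff x\equiv 1\pmod 4\iff v\equiv 0\pmod 4\iff\legendre{-1}{e}=1$.
\end{itemize}
The cases $(n_q,1)$ and $(n_q,n_q)$ are analogous. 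The last one uses $n_q=2e^2-f^2$ and a mod-$8$ analysis of $(2ex+z)(2ex-z)=w^2+2f^2x^2$. This global evaluation on an actual solution is exactly what your Cassels--Tate computation would have had to reproduce from local data.
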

 It is noteworthy that if $n$  satisfies conditions (i) and (ii) of Theorem \ref{thmPq}, then $h(-n) \equiv h(-n_q) \equiv 0 \pmod{2^{t+1}}$ (see \cite{lu} and \cite{wy}). Theorem \ref{thmPq} implies that  $h(-n)$  and $h(-n_q)$ remain in the same congruence classes modulo $2^{t+2}$ if $n$  is a congruent number. To prove Theorem \ref{thmPq}, we first examine the image of Mordell-Weil group under the $2$-descent map in \S \ref{preli}. We also exploit the $8$-rank of the class groups of $\Q(\sqrt{-n})$ and $\Q(\sqrt{-n_q})$ and prove some auxiliary results in \S \ref{4rank8rank}. In \S \ref{proofofthemaintheorem}, we prove the main theorem.  Finally in the last section, we give some examples as an evidence for  Theorem \ref{thmPq}. 

    \section{ Monsky matrix and 2-Selmer rank }\label{preli}

\noindent In this section, we introduce the Mordell-Weil group and $2$-Selmer rank of a congruent number elliptic curve $E_{m}$. We then explore the relationship between the 2-Selmer rank and the rank of the Monsky matrix associated with a square-free positive integer. \smallskip 

Consider the congruent number elliptic curve $E_m: y^2=x^3-m^2x$ where $m \in \mathbb{N}$ with $m$ being square-free. 
It is known that $E_m(\Q)\cong \Z^{r_m} \oplus \Z/2\Z \oplus \Z/2\Z$, where $r_m$ is the rank of the elliptic curve. Hence 
$E_m(\Q)/2E_m(\Q) \cong (\Z/2\Z)^{r_m+2}.$ Consider the $2$-Selmer group ${\mathrm{Sel}}_2(E_m/\Q)$ and the Shafarevich-Tate group $\Sh(E_m/\Q)$ as defined in \cite[Chapter X]{silv}. Then we have the following exact sequence \begin{equation}\label{exactenq}
	0 \to E_m(\Q)/2E_m(\Q) \to {\mathrm{Sel}}_2(E_m/\Q) \to \Sh(E_m/\Q)[2] \to 0. 
 	\end{equation} 
If $\#{\mathrm{Sel}}_2(E_m/\Q) = 2^{s_m+2}$, then we have $0 \leq r_m \leq s_m$. The integer $s_m$ is said to be the $2$-Selmer rank of $E_m$. Selmer conjectured that $r_m$ and $s_m$ have the same parity. Hence the $2$-Selmer rank $s_m$ plays an important role to estimate that Mordell-Weil rank $r_m$ of the Elliptic curve $E_m$ over $\mathbb{Q}$. In \cite[Appendix]{hb}, Monsky showed that $s_m$ is even if $m\equiv 1,2,3 \pmod 8$ and, is odd if $m \equiv 5,6,7 \pmod 8$. Hence we cannot predict for $m$ being a congruent or non-congruent number when $s_m$ is even assuming the Selmer conjecture. \smallskip 
    
Let $m=p_1p_2\cdots p_r$ be a square-free natural number where $p_i$'s are odd primes. For $l \in \{2,-2\}$, we define a $r \times r$ diagonal matrix $D_l=[d_{i,l}]$  and another  $r \times r$ matrix $C=[c_{i,j}]$ respectively as follows: \begin{equation*}
    d_{i,l}=\begin{cases}
    0	\text{ if } \legendre{l}{p_i}=1, \\
    1	\text{ if } \legendre{l}{p_i}=-1, \end{cases} c_{i,j}=\begin{cases}
    0	\text{ if } \legendre{p_j}{p_i}=1 \textit{ with } j\neq i,\\
    1	\text{ if } \legendre{p_j}{p_i}=-1 \textit{ with } j\neq i, \end{cases} \text{ and } c_{i,i}=\sum_{j; j\neq i} c_{i,j}. \end{equation*}
    The Monsky matrix is defined as follows:
    \begin{equation}
    \label{Monsky_matrix}
      M=\begin{bmatrix}
    	C+D_2 & D_2 \\
    	D_2 & C+D_{-2}
    \end{bmatrix}.   
    \end{equation}
    \begin{lemma}\cite[Appendix]{hb}\label{sm}
       Let $s_{m}$ be the 2-Selmer rank of the elliptic curve $E_{m}$ over $\Q$ and $M$ be the corresponding Monsky matrix as defined in \eqref{Monsky_matrix}. Then $s_m= 2r- {\operatorname{rank}(M)}$.
    \end{lemma}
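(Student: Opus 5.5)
We follow Monsky's argument in the appendix of \cite{hb}: an explicit full $2$-descent on $E_m$, which has all of its $2$-torsion rational. Write $E_m: y^2=x(x-m)(x+m)$. There is the standard injective homomorphism
\[
\delta: E_m(\Q)/2E_m(\Q)\to (\Q^\times/\Q^{\times 2})^2,\qquad (x,y)\mapsto (x,\; x-m)
\]
on points with $y\neq 0$, with the usual modification at the $2$-torsion points. The $2$-Selmer group is the set of pairs $(d_1,d_2)$ such that the associated torsor
\[
d_1u^2-d_2v^2=m,\qquad d_1u^2-d_1d_2w^2=-m
\]
is solvable in every completion $\Q_\ell$. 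One may take $d_1,d_2$ to be square-free divisors of $2m$, possibly negative. The image of the torsion $E_m[2]$ is an explicit subgroup of order $4$. The first step is to show that every class of $\mathrm{Sel}_2(E_m/\Q)$ modulo this subgroup has a unique representative of a normalized shape, so that the normalized Selmer elements form an $\mathbb F_2$-vector space. Monsky's normalization uses suitable combinations of $-1$ and $2$ as the free parameters, and records the $p_i$-parts of the two $d$'s as vectors $a,b\in\mathbb F_2^r$. This gives $\#\mathrm{Sel}_2(E_m/\Q)=4\cdot\#\{\text{admissible }(a,b)\}$.

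The second step is the local analysis, prime by prime.
\begin{itemize}
\item For $\ell\nmid 2m\infty$, the torsor has good reduction. Hensel's lemma together with the Hasse--Weil bound gives a point, so there is no condition.
\item At $\ell=p_i$, we split into cases according to whether $p_i$ divides $d_1$, $d_2$, both, or neither. We rewrite the solvability criterion as the vanishing of a sum of Hilbert symbols $(\cdot,\cdot)_{p_i}$. These symbols are Legendre symbols $\legendre{p_j}{p_i}$, $\legendre{2}{p_i}$ and $\legendre{-1}{p_i}$. Written additively over $\mathbb F_2$, the condition becomes one linear equation in $(a,b)$.
\end{itemize}
Collecting the $2r$ equations (two per prime $p_i$) should give exactly $M\binom{a}{b}=0$. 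The diagonal entries $c_{i,i}=\sum_{j\neq i}c_{i,j}$ arise from expanding $\legendre{m/p_i}{p_i}$, and the blocks $D_{\pm 2}$ arise from the normalizing factors involving $2$ and $-1$.

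The third step is to dispose of the places $2$ and $\infty$. Positivity is built into the normalization, which handles $\infty$. For the prime $2$, we use Hilbert reciprocity $\prod_v(\alpha,\beta)_v=1$: once the conditions at all odd places and at $\infty$ hold, the condition at $2$ follows. Consequently the admissible pairs form exactly $\ker M$. Hence $\#\mathrm{Sel}_2=4\cdot 2^{2r-\operatorname{rank}M}$, that is, $s_m=2r-\operatorname{rank}(M)$.

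The main obstacle is the normalization in the first step together with the matching in the second step. One must choose representatives so that the local conditions at each $p_i$ are genuinely linear, and so that they reproduce precisely the block structure $\begin{bmatrix} C+D_2 & D_2\\ D_2 & C+D_{-2}\end{bmatrix}$ rather than a conjugate of it. I would first work out the cases $r=1,2$ by hand to fix the normalization, and then verify the general local computation at $p_i$.
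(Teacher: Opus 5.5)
\textbf{Verdict: there is a gap at the prime $2$.} Your outline follows the route the paper relies on: Heath-Brown's description $\#\mathrm{Sel}_2(E_m/\Q)=4\,\#K$ with $K=\bigcap_{p\mid m}\ker\phi_p$, together with Monsky's observation that writing $\phi_{p_i}$ on the generators $(p_j,1),(1,p_j)$ gives exactly $M\binom{x}{y}=0$. Your analysis at $\ell\nmid 2m$ and at the $p_i$ is fine. Two small corrections: the condition at $p_i$ is two linear equations, not one; and your worry about obtaining a ``conjugate'' of $M$ is moot, since only the rank enters.

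The step that does not go through as written is the prime $2$. Hilbert reciprocity is a statement about quaternion symbols, that is, about solvability of conics. Local solvability of the torsor, which is an intersection of two quadrics, is not a priori a Hilbert-symbol condition. In general the Selmer condition at one place is \emph{not} a formal consequence of the others: reciprocity only forces the localization at $2$ to be orthogonal to the localizations of global Selmer classes, which is weaker than lying in the Kummer image. What rescues you here is an explicit $2$-adic computation:
\begin{itemize}
\item $E_m(\Q_2)/2E_m(\Q_2)$ has order $8$. It is spanned by the torsion image and the class of the $\Q_2$-point with $x=m+40$, which is $(d_1,d_2,d_3)\equiv(1,5,5)$ modulo torsion.
\item From this one checks that, for positive odd $a,b$, the class $(ab,b,a)$ lies in the $2$-adic image if and only if $(b,am)_2=(a,-bm)_2=(2a,2b)_2=1$.
\item These are the Hilbert symbols of the conics $abx^2+my^2=az^2$, $abx^2-my^2=bw^2$ and $2abx^2=az^2+bw^2$ in the pencil.
\end{itemize}
Only after this reduction can reciprocity be applied. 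The three conics are solvable at $\infty$ (since $a,b>0$), at every $\ell\nmid 2m$, and at each $p\mid m$ (because the torsor is solvable there). Hence they are also solvable at $2$.

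The same real and $2$-adic input is already needed in your first step, which cannot be carried out with the torsion action alone. The $\{\pm1,\pm2\}$-parts of $(d_2,d_3)=(x-m,x+m)$ range over a group of order $16$, while the torsion image only accounts for a subgroup of order $4$. You must first show that every Selmer class satisfies:
\begin{itemize}
\item $d_3>0$ and $\operatorname{sign}d_1=\operatorname{sign}d_2$, which follows from $x\in[-m,0]\cup[m,\infty)$;
\item $v_2(d_2)\equiv v_2(d_3)\pmod 2$, which follows from $x(x^2-m^2)\in\Q_2^{\times 2}$ by treating the cases $v_2(x)<0$, $v_2(x)=0$, $v_2(x)>0$ separately.
\end{itemize}
Only then does multiplying by the images $(-1,-m,m)$ of $(0,0)$ and $(m,2,2m)$ of $(m,0)$ produce a unique representative $(ab,b,a)$ with $a,b\in G$. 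So the conditions at $\infty$ and $2$ are not ``built into'' the normalization for free; they are used up in it. The remaining $2$-adic condition on normalized classes still has to be reduced to Hilbert symbols as above before your reciprocity argument is valid.
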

\noindent  Consider the set $G=\{a:a>0,\; a|m \}$. It is easy to check that $G$ forms a group under the following binary operation:
    \begin{equation}\label{defineG}
    a * b=\frac{ab}{(a,b)^2} , ~~~ \, \, a,b \in G.
    \end{equation}
Note that $G \cong (\Z/2\Z)^{r}$. For each prime $p\,|\,m$, define a homomorphism $\phi_p$ by \begin{equation}
    	\begin{split}
    	\phi_p: G \times G & \to \{\pm 1\} \times \{\pm 1\} \\
    	\phi_p(a,b) & = \left(\legendre{a}{p}, \legendre{b}{p} \right) \text{ for } p \nmid ab, \\
    	\phi_p(1,m) = &\left(\legendre{2}{p}, \legendre{-2}{p} \right),~~~
    	\phi_p(m,1)  = \left(\legendre{2}{p}, \legendre{2}{p} \right). 
        \end{split}
    \end{equation}
   Also consider the following system of equations 
   \begin{equation}\label{congeq}
   	\begin{split}
   	abx^2+my^2 & = az^2 \\
    abx^2-my^2 & = bw^2
   \end{split}
   \end{equation}
   The system of equations (\ref{congeq}) are solvable in $\Q_p$ for each $p$ if $(a,b) \in {\mathrm{ker}(\phi_p)}$ for each $p$. 
   \begin{lemma}
   \label{expressK}
   	Let $K:= \bigcap\limits_{p\,|\,m} {\mathrm{ker}(\phi_p)}$. Then $\# K= 2^{s_m}$.
   \end{lemma}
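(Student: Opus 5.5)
This is the standard identification of the 2-Selmer group of $E_m$ with pairs of divisors of $m$ whose associated torsors are everywhere locally solvable. I will verify that identification in the present coordinates.

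\textbf{Step 1: the 2-descent map.} For $E_m$, with full rational 2-torsion, use the 2-descent map
\[
E_m(\Q)/2E_m(\Q)\hookrightarrow (\Q^\times/\Q^{\times 2})^2, \qquad (x,y)\mapsto (x,\;x-m)
\]
(the third coordinate $x+m$ is determined by the product being a square). Its image, and likewise $\mathrm{Sel}_2(E_m/\Q)$, lies in the subgroup generated by $-1$, $2$ and the primes dividing $m$, modulo squares. The standard computation (as in Silverman, Ch.\ X) identifies $\mathrm{Sel}_2(E_m/\Q)$ with those pairs $(d_1,d_2)$ for which the associated torsor $d_1u^2-d_2v^2=m$, $d_1u^2-d_1d_2w^2=-m$ has a point in every $\Q_v$. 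After the change of variables used by Monsky and Heath-Brown, this torsor is the system (\ref{congeq}).

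\textbf{Step 2: split off the torsion.} The images of the three nontrivial 2-torsion points account for the factor $4$ in $\#\mathrm{Sel}_2=2^{s_m+2}$. Multiplying by these images normalizes each Selmer class to a unique representative $(a,b)\in G\times G$ with $a,b>0$ and $a,b\mid m$, so that the class is represented by the system (\ref{congeq}). This removes the factors $-1$ and $2$. It uses the real place and the place $2$: I would check that positivity is forced by solvability over $\mathbb{R}$, and that local solvability at $2$ is automatic once the odd places are satisfied. The latter should follow from Hilbert reciprocity, since the product of local Hilbert symbols is $1$. Hence $\#\mathrm{Sel}_2/4=\#\{(a,b)\in G\times G : \text{(\ref{congeq}) solvable everywhere}\}$.

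\textbf{Step 3: odd primes.} For $p\nmid 2m$, the system has good reduction and is always solvable by Hensel's lemma. For $p\mid m$, solvability of (\ref{congeq}) in $\Q_p$ is exactly $(a,b)\in\ker\phi_p$. The paper states one direction before the lemma; I also need the converse, via a case check on whether $p$ divides $a$, $b$, both, or neither. In each case, reduce mod $p$ after scaling out powers of $p$ and read off the residue conditions. Combined with the group structure (\ref{defineG}), this gives exactly the Legendre-symbol values defining $\phi_p$, including the special values at $(1,m)$ and $(m,1)$. Thus the everywhere-solvable set is $K$, and $\#K=2^{s_m}$.

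\textbf{Main obstacle.} I expect the hardest part to be Step 2 at the prime $2$: showing that $2$-adic solvability imposes no additional condition, or equivalently that it is already encoded in the $\phi_p$. If reciprocity does not apply cleanly, I would instead use Lemma \ref{sm}. In that route I would show directly that $K$ is the kernel of a linear map $\mathbb{F}_2^{2r}\to\mathbb{F}_2^{2r}$ whose matrix is the Monsky matrix $M$. Writing $a=\prod p_j^{x_j}$ and $b=\prod p_j^{y_j}$, the $\phi_{p_i}$-conditions become the linear equations $(C+D_2)x+D_2y=0$ and $D_2x+(C+D_{-2})y=0$. Verifying this needs bookkeeping of how $\legendre{m/p_i}{p_i}$ and $\legendre{\pm2}{p_i}$ enter when $p_i\mid a$ or $p_i\mid b$. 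It would then give $\#K=2^{2r-\operatorname{rank}M}=2^{s_m}$ by Lemma \ref{sm}.
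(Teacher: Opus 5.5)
\textbf{Verdict.} Your proposal is correct. The paper gives no argument for this lemma; it only cites Heath-Brown and Monsky. Your Steps 1--3 reconstruct the standard descent behind those references. Your fallback is a different and shorter route, and it works as you hope.

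\textbf{The fallback route.} Write $a=\prod p_j^{x_j}$ and $b=\prod p_j^{y_j}$. Split
\[
(a,b)=(m,1)^{x_i}*(1,m)^{y_i}*(a',b') \quad\text{with } p_i\nmid a'b'.
\]
Using $c_{ii}=\sum_{j\ne i}c_{ij}$, all four cases of $(x_i,y_i)$ give
\[
\epsilon\phi_{p_i}(a,b)=\big(((C+D_2)x+D_2y)_i,\ (D_2x+(C+D_{-2})y)_i\big).
\]
Hence $K=\ker M$ and $\#K=2^{2r-\operatorname{rank}M}=2^{s_m}$ by Lemma \ref{sm}.

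Inside this paper, which treats Lemma \ref{sm} as a black box, that is the quickest proof. It is not independent of the descent, though. Monsky obtains Lemma \ref{sm} by precisely identifying $\mathrm{Sel}_2/E_m[2]$ with such pairs and then linearizing. So the descent route is the genuine proof, and the fallback is a legitimate bookkeeping reduction to a cited result.

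\textbf{The $2$-adic step in the descent route.} Your justification needs more than ``the product of local Hilbert symbols is $1$''. Applied to the two conics in (\ref{congeq}), that only gives $2$-adic solvability of each conic, not of their intersection. Here is what works.
\begin{itemize}
\item Use Kummer coordinates $(x+m,x-m)$ and the local pairing $\langle(a_1,a_2),(b_1,b_2)\rangle_v=(a_1,b_2)_v(a_2,b_1)_v$. The local images $L_v$ are isotropic.
\item $L_2=\langle (m,-m),(2m,2),(5,5)\rangle$, which is maximal isotropic of dimension $3$. The class $(5,5)$ comes from the $\mathbb{Q}_2$-point with $x$-coordinate $s^2/4$, $s$ odd.
\item Let $(a,b)\in G\times G$ be solvable at every $v\neq 2$. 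Hilbert reciprocity then gives $\langle (a,b),t\rangle_2=1$ for the torsion images $t$.
\item Also $\langle (a,b),(5,5)\rangle_2=1$, because $a$ and $b$ are $2$-adic units.
\item Therefore $(a,b)\in L_2^{\perp}=L_2$.
\end{itemize}

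\textbf{Step 3.} This is cleanest by observing that for odd $p\mid m$ the local image $L_p$ is just the image of $E_m[2]$. Then $(1,m)\cdot(2,-2m)\sim(2,-2)$ and $(m,1)\cdot(2m,2)\sim(2,2)$ in $\mathbb{Q}_p$. These are exactly the special values in the definition of $\phi_p$, so no case-by-case reduction is needed.

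\textbf{Normalization in Step 2.} Real solvability forces only $a>0$. The sign of $b$ is fixed by the torsion class $(m,-m)$. Removing the factor $2$ uses the equal $v_2$-parities of the two coordinates, which comes from $2$-adic solvability.
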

 \begin{lemma}
\label{dequa_rank_Selmer}
There are exactly $2^{r_{m}}$ systems in (\ref{congeq}) with non-trivial integer solutions. Moreover, there are $2^{s_{m}}$ systems in (\ref{congeq}) which are everywhere locally solvable.
\end{lemma}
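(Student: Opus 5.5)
The statement to prove is Lemma \ref{dequa_rank_Selmer}: there are exactly $2^{r_m}$ systems \eqref{congeq} with non-trivial rational (equivalently, integer) solutions, and $2^{s_m}$ systems that are everywhere locally solvable. I will set it up as the standard complete $2$-descent on $E_m$, written in the coordinates of \eqref{congeq}.

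The starting point is the $2$-descent homomorphism $\delta: E_m(\Q)/2E_m(\Q) \to (\Q^\times/\Q^{\times 2})^2$ from \cite[Chapter X]{silv}. It sends $(x,y) \mapsto (x, x-m)$ for points off the $2$-torsion, with the usual modification at $(0,0)$ and $(m,0)$. Its image lies in pairs of square classes supported on primes dividing $2m$ and $-1$.

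1. Reduce the ambient group to $G \times G$. Any class in $\mathrm{Sel}_2(E_m/\Q)$ is represented by a pair $(d_1,d_2)$ for which the torsor $d_1u^2 = x$, $d_2v^2 = x-m$, $d_1d_2 w^2 = x+m$ is locally solvable everywhere. The images of the three nontrivial $2$-torsion points account for signs and the prime $2$. After multiplying by these images (which is exactly what the factor $2^2$ in $\#\mathrm{Sel}_2 = 2^{s_m+2}$ records), every Selmer class can be written uniquely as a torsion image times a class coming from $(a,b) \in G\times G$ with $a,b>0$ odd divisors of $m$.

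2. Convert the torsor into \eqref{congeq}. Eliminating $x$ via suitable substitutions (e.g. $x = m\cdot\frac{\text{square}}{\ldots}$) puts the homogeneous space in the form $abx^2+my^2=az^2$, $abx^2-my^2=bw^2$. I would check this correspondence directly. The map is then $E_m(\Q)/2E_m(\Q) \to (\Z/2)^2 \times (G\times G)$.

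3. Count. Since $(\Z/2)^{r_m+2}$ injects into $\mathrm{Sel}_2$ by \eqref{exactenq}, and the torsion accounts for $(\Z/2)^2$, the remaining part has order $2^{r_m}$. This part consists exactly of the pairs $(a,b)$ whose system has a nontrivial global solution, giving the first count. The everywhere locally solvable pairs form $K$, which has order $2^{s_m}$ by Lemma \ref{expressK}, giving the second count.

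The main obstacle is step 1: showing that twisting by the torsion images normalises every Selmer class to a pair of positive odd divisors. This needs local arguments at $\infty$ and at $2$, namely that solvability over $\mathbb{R}$ and $\Q_2$ pins down the sign and $2$-adic components modulo the torsion image. For these local computations I would follow Monsky's appendix in \cite{hb}.
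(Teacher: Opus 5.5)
Your proposal is correct, and it is the standard complete $2$-descent behind the results the paper simply cites from \cite{hb2,hb}. The local normalisation you flag as the main obstacle is routine: real solvability of your torsor forces $d_1$ and $d_2$ to have the same sign, and after twisting by $(-1,-m)$ and $(m,2)$ a $2$-adic valuation argument forces $v_2(d_1)$ to be even.

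Two bookkeeping points. First, the system \eqref{congeq} for $(a,b)$ encodes $X\in ab\,\Q^{\times 2}$, $X+m\in a\,\Q^{\times 2}$, $X-m\in b\,\Q^{\times 2}$. Under your map $(x,x-m)$ it is therefore the fibre over $(a*b,\,b)$ rather than $(a,b)$; this is harmless, since $(a,b)\mapsto(a*b,b)$ is an automorphism of $G\times G$. Second, the count $2^{s_m}$ follows directly from your step 1, so you need not identify the locally solvable pairs with $K$.
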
  
The proofs of the above lemmas can be found in \cite{hb2} and \cite{hb}. Let $n$ be as in Theorem \ref{thmPq}. We recall the matrix $A_{n}$ defined as in (\ref{matrixA}). The following crucial result plays a significant role in proving our main theorem.
 \begin{proposition}
    \label{expK}
    Suppose that $n$ is defined as in Theorem \ref{thmPq}. Then we have that $s_n=2$ and $K= \{(1,1),(n_q,1),(1,n_q),(n_q,n_q)\}$, where $n_q=\frac{n}{q}$ and $K$ is as in Lemma \ref{expressK}.
    \end{proposition}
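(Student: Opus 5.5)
We would first compute $s_n$ through Lemma \ref{sm}, using the Monsky matrix of $n=p_1\cdots p_tq$ with $r=t+1$, primes ordered $p_1,\dots,p_t,q$. Since $p_i\equiv 1\pmod 8$, both $\legendre{2}{p_i}$ and $\legendre{-2}{p_i}$ equal $1$. Since $q\equiv 3\pmod 8$, we have $\legendre{2}{q}=-1$ and $\legendre{-2}{q}=1$. Hence $D_2=\operatorname{diag}(0,\dots,0,1)$ and $D_{-2}=0$.

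Next we compute $C$. Hypothesis (i) gives $\legendre{q}{p_i}=1$. By quadratic reciprocity, since $p_i\equiv 1\pmod 4$, also $\legendre{p_i}{q}=1$. So the last row and column of $C$ vanish off the diagonal, and $c_{q,q}=0$. The block for the $p_i$'s is exactly $A_n$, because its diagonal entries only count the $p_j$'s. Thus $C=\begin{bmatrix}A_n&0\\0&0\end{bmatrix}$, and
\begin{equation*}
M=\begin{bmatrix} A_n\oplus[1] & 0\oplus[1]\\ 0\oplus[1] & A_n\oplus[0]\end{bmatrix}.
\end{equation*}
After permuting rows and columns, $M$ is the direct sum $A_n\oplus A_n\oplus \begin{bmatrix}1&1\\1&0\end{bmatrix}$. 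The $2\times2$ block is invertible over $\mathbb F_2$. By hypothesis (ii), $\operatorname{rank}M=2(t-1)+2=2t$. Lemma \ref{sm} then gives $s_n=2(t+1)-2t=2$, and Lemma \ref{expressK} gives $\#K=4$.

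It remains to verify that the four listed pairs lie in $K$. Since $K$ is a subgroup of $G\times G$, it suffices to show that $(n_q,1)$ and $(1,n_q)$ lie in $\ker\phi_p$ for every $p\mid n$. Counting then forces $K$ to equal the listed set, because these two elements generate a group of order $4$.

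For $p=q$, the prime $q$ does not divide $n_q$. So $\phi_q(n_q,1)=\left(\legendre{n_q}{q},1\right)$, and $\legendre{n_q}{q}=\prod_i\legendre{p_i}{q}=1$. The same holds for $(1,n_q)$.

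For $p=p_i$, the prime $p$ divides $n_q$, so the defining values in the definition of $\phi_p$ do not apply directly. Here we would use multiplicativity, writing $n_q=n* q$ in the group $G$. Then $\phi_p(n_q,1)=\phi_p(n,1)\,\phi_p(q,1)=\left(\legendre{2}{p},\legendre{2}{p}\right)\cdot\left(\legendre{q}{p},1\right)=(1,1)$. Similarly $\phi_p(1,n_q)=\phi_p(1,n)\,\phi_p(1,q)=\left(\legendre{2}{p},\legendre{-2}{p}\right)\cdot\left(1,\legendre{q}{p}\right)=(1,1)$, using $p\equiv1\pmod 8$ and hypothesis (i).

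The main obstacle is the bookkeeping in the definition of $\phi_p$ when $p\mid ab$. One must use the homomorphism property with the normalized values at $(m,1)$ and $(1,m)$, rather than naive Legendre symbols. One must also check that $A_n$ is exactly the upper-left block of $C$ once $\legendre{p_i}{q}=1$ removes the contributions from $q$.
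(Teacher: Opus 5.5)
Your proposal is correct and follows the same route as the paper. You compute the Monsky matrix as $\begin{bmatrix} A_n\oplus[1] & 0\oplus[1]\\ 0\oplus[1] & A_n\oplus[0]\end{bmatrix}$, deduce $\operatorname{rank}M=2t$ and $s_n=2$ from Lemma \ref{sm}, and conclude from $\#K=4$ once the four pairs are shown to lie in every $\ker\phi_p$. Your splitting $A_n\oplus A_n\oplus\begin{bmatrix}1&1\\1&0\end{bmatrix}$ and your evaluation of $\phi_{p_i}$ via $n_q=n*q$ fill in exactly the two steps the paper only asserts.
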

    
    \begin{proof}
           It is easy to see that, in this case, the Monsky matrix $M$ (see (\ref{Monsky_matrix})) has the form $$M= \begin{bmatrix}
    	R & S \\
    	S & T
    	\end{bmatrix},$$ where 
     $R,S,T$ are $(t+1) \times (t+1)$ matrices with $R=S+T$,  $S=[{s}_{i,j}], \; T=[{t}_{i,j}]$, such that 
   \begin{equation*}
    {s}_{i,j}=\begin{cases}
    1	\quad \text{ if } i=j=t+1, \\
    0	\quad \text{ otherwise, } \end{cases} \quad \text{ and, }{t}_{i,j}=\begin{cases}
    a_{i,j}	~~\text{ if }  1 \leq i,j \leq t, \\
    0	\quad \text{ otherwise.} \end{cases}
   \end{equation*}

    The numbers $a_{i,j}$ are defined as in (\ref{matrixA}). As ${\operatorname{rank}}(A_{n})=t-1$, we have that ${\operatorname{rank}}(M)=2t$. Since there are $(t+1)$ number of primes in the prime factorazition of $n$, by Lemma \ref{sm} we get $s_n=2$. Note that, for each prime $p$, the set $\{(1,1),(n_q,1),(1,n_q),(n_q,n_q)\} \subseteq {\mathrm{ker}(\phi_p)} \subseteq K$. Using Lemma \ref{expressK}, we have the desired result.
    \end{proof}

\begin{remark}
\label{rem_Sel_to_Mordell}{\rm 
Suppose that $n$ is defined as in Theorem \ref{thmPq}. Assume that we have the Mordell-Weil rank $r_{n} \geq 1$. So, we have $s_{n} \geq 1$. Then by Lemma \ref{dequa_rank_Selmer}, the system of equations (\ref{congeq}) has non-trivial and pairwise coprime integral solutions $(x,y,z,w)$ for $n$ for at least one non-trivial pair $(a,b)$. Noting that the existence of a global solution to the system of equations (\ref{congeq}) implies the existence of its local solution, we can conclude that the system of equations (\ref{congeq}) has a non-trivial solution for at least one non-trivial pair $(a,b) \in K$, where $K$ is defined in Lemma \ref{expressK}}.
\end{remark}

\section{4-rank and 8-rank of the class group} \label{4rank8rank}
\noindent In this section, we provide the required formula to compute the $8$-rank for the imaginary quadratic field $F=\mathbb{Q}(\sqrt{-n})$, where $n$ is defined as in Theorem \ref{thmPq}. Our focus turns to a square-free positive integer $P = p_{1}p_{2} \cdots p_{t}$, where all prime numbers $p_{i}$ satisfy $p_{i} \equiv 1 \pmod{8}$. Notably, $\pm 2$ is a quadratic residue modulo $p_{i}$ for each $i$, implying that $p_{i}$ splits in the quadratic field $\mathbb{Q}(\sqrt{\pm 2})$, which has class number $1$. This ensures that $P$ can be expressed as
\begin{equation}
\label{expP}
P = 2e^{2} - f^{2} = u^{2} + 2v^{2},
\end{equation}
where $e$, $f$, and $u$ are odd, and $v$ is even, and all are positive. The following lemma establishes a relation between the numbers $e$ and $v$. 

\begin{lemma}
\label{rebeeandv}
Let $P, e , v$ be defined as in (\ref{expP}). Then $\legendre{-1}{e}=1 \iff v \equiv 0 \pmod{4}$. 
\end{lemma}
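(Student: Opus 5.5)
The plan is to combine the two representations in \eqref{expP} into a single identity and read off the residue of $e \bmod 4$ from a sum-of-two-squares argument. Subtracting $P = u^{2}+2v^{2}$ from $P = 2e^{2}-f^{2}$ gives
\[
u^{2}+f^{2} = 2(e^{2}-v^{2}) = 2(e-v)(e+v).
\]
Since $e$ is odd and $v$ is even, both $e-v$ and $e+v$ are odd. We also have $e>v>0$, because $2e^{2}=P+f^{2}>P>2v^{2}$. So $e-v$ is a positive odd integer. The goal is to show $e-v\equiv 1 \pmod 4$. Granting that, we get $e\equiv 1+v \pmod 4$, and since $v$ is even, $e\equiv 1 \pmod 4$ (that is, $\legendre{-1}{e}=1$) exactly when $v\equiv 0 \pmod 4$.

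To prove $e-v\equiv 1\pmod 4$, I will show that every prime $r\equiv 3 \pmod 4$ divides $e-v$ to an even power. A positive odd integer with this property is $\equiv 1 \pmod 4$. The argument has two steps.

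\emph{Step 1: $\gcd(e-v,e+v)=1$.} A common prime factor $r$ would be odd and would divide both $e$ and $v$. Reducing $P=2e^{2}-f^{2}=u^{2}+2v^{2}$ modulo $r$ then gives $-f^{2}\equiv u^{2} \pmod r$. If $r\nmid u$, this makes $-1$ a square mod $r$, so $r\equiv 1\pmod 4$. If $r\mid u$, then $r\mid u$ and $r\mid v$, so $r^{2}\mid u^{2}+2v^{2}=P$, contradicting that $P$ is square-free. The case $r\equiv 1 \pmod 4$ is the delicate one and needs a separate exclusion. I would try reducing modulo $r$ again: $r\mid e$ gives $P\equiv -f^{2}$, and $r\mid v$ gives $P\equiv u^{2}$. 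Moreover $r\nmid P$, since otherwise $r\mid f$ and $r^{2}\mid P$. I expect this coprimality to be the main obstacle.

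\emph{Step 2: parity of exponents.} Given coprimality, the exponent of $r$ in $e-v$ equals its exponent in $(u^{2}+f^{2})/2$. For $r\equiv 3\pmod 4$ that exponent is even, by the classical fact that such primes appear to even power in any sum of two squares.

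If coprimality fails for some prime $r\equiv 1 \pmod 4$, this does not actually hurt the argument. Such primes contribute factors $\equiv 1 \pmod 4$ whatever their exponents. So it suffices to exclude common primes $r\equiv 3\pmod 4$, and Step 1 already does exactly that. With this observation the proof is complete: every prime $\equiv 3\pmod 4$ divides $e-v$ to an even power, so $e-v\equiv 1\pmod 4$, and the equivalence follows.
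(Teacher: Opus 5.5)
Your proof is correct and follows essentially the same route as the paper. Both start from the identity $u^{2}+f^{2}=2(e-v)(e+v)$ and use the fact that primes $\equiv 3 \pmod 4$ occur to even powers in a sum of two squares to force $e-v\equiv e+v\equiv 1 \pmod 4$; you rule out a common prime factor of $e\pm v$ via square-freeness of $P=u^{2}+2v^{2}$ where the paper uses $\gcd(e,f)=1$, and your version is more careful in checking $e-v>0$ and tracking full exponents rather than only noting that $\ell^{2}$ divides one factor.
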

    \begin{proof}
    It is easy to see that $e,f,u$ are odd, $v$ is even and $u^2\equiv f^2\equiv 1 \pmod{8}$. From (\ref{expP}), we have $2(e+v)(e-v)=u^2+f^2$. 
	
    Let $\gcd(u,f)=d$ and $l$ be an odd prime factor of $u^2+f^2$. If $l \nmid d$, then $l \equiv 1 \pmod{4}$. Indeed $u^2+f^2 \equiv 0 \pmod l$ which implies that $\legendre{-1}{l}=1$ and this is true if and only if $l \equiv 1 \pmod{4}$. 
  
 If $l\mid d=\gcd(u, f)$,  then $l^2 \, | \, (e+v)(e-v)$. Note that $l$ divides exactly one of $e+v$ and $e-v$, since $\gcd(e,f)=1$. Hence $l^2$ divides exactly one of $e+v$ and $e-v$ and $l^2 \equiv 1 \pmod 4$. Thus $e\pm v\equiv 1 \pmod{4}$. Hence $e \equiv 1 \pmod{4}  \iff v \equiv 0 \pmod{4}$.
\end{proof}
	
\noindent Let $m$ be a square-free positive integer and the ideal class group $\mathcal{C}(-m)$ of $\Q(\sqrt{-m})$. For $k \geq 1$, the $2^k$-rank (see \cite{lu}) of $\mathcal{C}(-m)$ is defined by 
    \begin{equation}\label{rankofclassgrp}
    r_{2^k}(-m)= \dim_{\mathbb{F}_2} 2^{k-1}\mathcal{C}(-m)/2^k \mathcal{C}(-m),
     \end{equation}
     where $\mathbb{F}_2$ is the field containing $2$ elements. Let $D(-m)$ be the discriminant of the quadratic field $\Q (\sqrt{-m})$. Assume that $D(-m)$ has $r$ distinct prime factors in its prime decomposition. From the Gauss genus theory, it is well-known that $r_2(-m)=r-1$. R\`edei studied the $4$-rank of $\mathcal{C}(-m)$ using the Hilbert symbol of rational numbers. For a prime $p$ and integers $a,b$, we write $a=p^\alpha a', b= p^\beta b'$ with $a',b'$ are integers coprime to $p$. For an odd integer $x$, let $w(x):=\frac{x^2-1}{8}$. Then the Hilbert symbol of $a,b$ is defined as
    \begin{equation*}
    \legendre{a,b}{p}= \begin{cases}
        {(-1)}^{\frac{a'-1}{2} \frac{b'-1}{2} + \beta \omega(a') + \alpha \omega(b')} \text{ if } p=2, \\
       (-1)^{\frac{p-1}{2}\alpha \beta} \legendre{a'}{p}^\beta \legendre{b'}{p}^\alpha \quad \, \text{ otherwise. } 
       
    \end{cases}
    \end{equation*}
  \subsection{Relation between 4-rank and 8-rank of class groups} We now concentrate on the square-free natural numbers $n=p_1p_2\cdots p_tq$  with $p_i \equiv 1 \pmod 8$ and $q \equiv 3 \pmod 8$ are prime numbers and $t \geq 1$. Let $\epsilon: \{ \pm 1 \} \to \mathbb{F}_2$ be the group homomorphism defined by $\epsilon(1)=0$ and $\epsilon(-1)= 1$. Then the modified R\`edei martix \cite[\S 2.2]{lu} $R_n$ (for the $4$-rank) over the field $\mathbb{F}_2$ is defined to be \[ R_n= \Bigg(\epsilon \legendre{p_i,-n}{p_j}\Bigg)_{1 \leq i \leq t, 1 \leq j \leq t}.\]

\begin{proposition}\cite[Proposition 2.4]{lu} Suppose that $n$ is defined as in Theorem \ref{thmPq}. Then the $4$-rank of $\mathcal{C}(-n)$ is given by
		$r_4(-n)= t - {\operatorname{rank}}_{\mathbb{F}_2}(R_n) $.
	\end{proposition}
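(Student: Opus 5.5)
We prove the formula $r_4(-n)=t-\operatorname{rank}_{\mathbb{F}_2}(R_n)$ by R\'edei's classical description of $2\mathcal{C}(-n)\cap \mathcal{C}(-n)[2]$ in terms of ``$D$-decompositions'' of the discriminant. The discriminant of $\Q(\sqrt{-n})$ is $D=-n$, since $-n\equiv 1 \pmod 4$ (indeed $n\equiv 3\pmod 8$). Its prime divisors are $p_1,\dots,p_t,q$, so $r_2(-n)=t$ by genus theory. The elements of order dividing $2$ in $\mathcal{C}(-n)$ are represented by the ramified prime ideals above the divisors $d$ of $n$, with the single relation coming from $\sqrt{-n}$ (i.e.\ $d\sim n/d$). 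Hence every class in $\mathcal{C}(-n)[2]$ is represented by exactly one $d\mid n_q$, using $q\sim n_q$ to eliminate $q$. This identifies $\mathcal{C}(-n)[2]\cong \mathbb{F}_2^t$ via the divisors of $n_q=p_1\cdots p_t$.

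The class of $d$ lies in $2\mathcal{C}(-n)$ iff it lies in the principal genus. By genus theory this holds iff all genus characters vanish on it, i.e.\ iff the Hilbert symbols $\legendre{d,-n}{p}=1$ for all $p\mid 2n$. This is the standard R\'edei criterion that $d$ is a norm from $\Q(\sqrt{-n})$ locally everywhere, equivalently $dx^2+(n/d)y^2=z^2$ solvable. The condition at $\infty$ is automatic since $d>0$. By the product formula, one of the remaining local conditions is redundant. We drop the condition at $q$, noting that $2$ is unramified in $\Q(\sqrt{-n})$, so the symbol at $2$ is trivial for odd $d$.

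Thus $r_4(-n)=\dim\{d\mid n_q : \legendre{d,-n}{p_j}=1,\ j=1,\dots,t\}$. Writing $d=\prod p_i^{x_i}$ with $x\in\mathbb{F}_2^t$ and using bimultiplicativity of the Hilbert symbol, the condition becomes the linear system $\sum_i x_i\,\epsilon\legendre{p_i,-n}{p_j}=0$ for every $j$. In other words, $x^{T}R_n=0$. The solution space has dimension $t-\operatorname{rank}R_n$, which gives the claim.

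The main obstacle is the bookkeeping in the first step. We must choose the representatives $d\mid n_q$ so that the map $\mathbb{F}_2^t\to\mathcal{C}(-n)[2]$ is an isomorphism, using the single relation $\mathfrak{q}\sim\prod\mathfrak{p}_i$. We must also justify discarding both the $q$-condition and the $2$-condition, via the product formula and the unramifiedness of $2$.
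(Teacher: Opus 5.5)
Your proof is correct. It is the standard R\'edei argument behind the cited \cite[Proposition 2.4]{lu}: you represent $\mathcal{C}(-n)[2]$ by divisors $d\mid n_q$, use $r_4(-n)=\dim(\mathcal{C}(-n)[2]\cap 2\mathcal{C}(-n))$, cut out the principal genus by the symbols $\legendre{d,-n}{p_j}$, drop the $q$-condition by the product formula, and note the $2$-condition is trivial because $2$ is unramified. The paper gives no proof beyond the citation, but the framework it sets up in \S\ref{4rank8rank} is exactly yours: $\psi\colon V'\to V$, the relation $\mathfrak{p}_1\cdots\mathfrak{p}_t\mathfrak{q}=1$, and $V_0$ as the part of $V$ killed by the characters $\chi_p$.
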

	
\begin{remark}
\label{r41}
We observe that if $n$ is defined as in Theorem \ref{thmPq}, then the modified R\`edei matrix $R_n$ is same as the matrix $A_n$ as in (\ref{matrixA}). Hence $r_4(-n)=1$.
\end{remark}
 

Let $V=\mathcal{C}(-n)[2]$ be the subgroup of $\mathcal{C}(-n)$ consisting of elements of order dividing $2$. The ideal class of the prime ideals $\mathfrak{p}_i, \mathfrak{q}$ of $\Q(\sqrt{-n})$ where $\mathfrak{p}_i^2=(p_i), \mathfrak{q}^2=(q)$ generate $V$. Since the only non-trivial relation among these elements is \begin{equation}\label{nontrivialrelation}
    \mathfrak{p}_1\mathfrak{p}_2\cdots \mathfrak{p}_t\mathfrak{q}=1,
\end{equation}
we see that $V$ forms a vector space over $\mathbb{F}_2$ of dimension $t$. Write $V_0=\mathcal{C}(-n)[2] \cap 2\mathcal{C}(-n)$. It can be checked that $V_0$ is a subspace of $V$ and $\dim_{\mathbb{F}_2}(V_0)=r_4(-n)$ \cite[\S 2.2]{lu}. 
Let $p$ be a prime such that $p\,|\,n$. Then the quadratic characters of $\mathcal{C}(-n)$ are defined by 
\begin{equation}\label{quadratic character}
\chi_{p}(\mathfrak{a})= \legendre{N\mathfrak{a},-n}{p}, 
\end{equation}
where $\mathfrak{a}$ is a fractional ideal of $\Q(\sqrt{-n})$ and $N(\mathfrak{a})$ denotes the norm of the ideal $\mathfrak{a}$. 
\begin{remark}
    The elements of $V_0$ are the elements of $\mathcal{C}(-n)[2]$ that are killed by all the quadratic characters of $\mathcal{C}(-n)$.
\end{remark}

\noindent For more details, we refer to \cite{lu}. Let $V'= \{d>0: d\,|\,n_q \}$ where $n_q=\frac{n}{q}$. We note that $n_q$ has $t$ number of distinct primes in its prime decomposition. For any two elements $a,b \in V'$, we define $a*b$ as in (\ref{defineG}). Then $V' \cong \left(\mathbb{Z}/ 2\mathbb{Z}\right)^{t}$ and hence $V'$ can be seen as a $\mathbb{F}_2$ vector space. 
Furthermore,  
    \begin{equation}\label{isomV'V}
       \begin{split}
         \psi : V' & \to V \\ 
         \psi (p_i) & = \mathfrak{p}_i
       \end{split} 
    \end{equation}
    is a vector space isomorphism. 
    Let $V_0'$ be the subspace of $V'$ identified with the preimage of $V_0 \subset V$ under the map $\psi$. Let $\mathcal{B}: V_0 \times V_0 \to \mathbb{F}_2$ be the bilinear form \cite[\S 2.3]{lu} defined by 
    \begin{equation}
    \label{bilinearf}
        \mathcal{B}(\mathfrak{a},D) = \epsilon\bigg(\legendre{N\mathfrak{b},-n}{D}\bigg) = \epsilon\bigg( \prod\limits_{p|D}\legendre{N\mathfrak{b},-n}{p}\bigg) 
    \end{equation}
    where $\mathfrak{b}$ is the fractional ideal such that $\mathfrak{b}^2$ and $\mathfrak{a}$ represent the same class in $\mathcal{C}(-n)$ and $D \in V_0'$ since $V_0' \cong V_0$. The following proposition relates the $4$-rank and $8$-rank of $\mathcal{C}(-n)$.
    \begin{proposition}\cite[Proposition 2.7.]{lu} \label{rebe4rk8rk}
        $r_8(-n)=r_4(-n)- {\operatorname{rank }}_{\mathbb{F}_2}\mathcal{B}$.
    \end{proposition}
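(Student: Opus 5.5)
The plan is to identify $r_8(-n)$ with the dimension of the left kernel of $\mathcal{B}$, using the formula $r_8(-n)=\dim_{\mathbb{F}_2}\big(\mathcal{C}(-n)[2]\cap 4\mathcal{C}(-n)\big)$. This formula follows from (\ref{rankofclassgrp}) and the structure of the finite abelian $2$-group, in the same way that $\dim V_0=r_4(-n)$. Once this is established, rank--nullity on $V_0$ gives $r_8(-n)=\dim V_0-\operatorname{rank}\mathcal{B}=r_4(-n)-\operatorname{rank}\mathcal{B}$.

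First I check that $\mathcal{B}$ in (\ref{bilinearf}) is well defined. Given $\mathfrak{a}\in V_0$, the class $\mathfrak{b}$ with $\mathfrak{b}^2\sim\mathfrak{a}$ is determined only up to an element of $V=\mathcal{C}(-n)[2]$, which is generated by the ramified primes $\mathfrak{p}_i,\mathfrak{q}$. So I need $\chi_D:=\prod_{p\mid D}\chi_p$ to be trivial on every $\mathfrak{p}_i$ and on $\mathfrak{q}$ whenever $D\in V_0'$. For this I use a R\'edei-type reciprocity computed with the Hilbert symbol. The value $\chi_D(\mathfrak{p}_i)=\prod_{p\mid D}\legendre{p_i,-n}{p}$ is, by the product formula together with $p_i\equiv 1\pmod 8$, equal to $\prod_{p_j\mid D}\chi_{p_i}(\mathfrak{p}_j)$ up to symbols at $2$ and $q$ that are trivial here. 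The right-hand side is $\chi_{p_i}(\psi(D))$, and this is $1$ because $\psi(D)\in V_0$ is killed by all genus characters. The same reasoning handles $\mathfrak{q}$, either directly or via the relation (\ref{nontrivialrelation}). Independence of the chosen representative ideal, and bilinearity, are immediate since the $\chi_p$ are characters.

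Next I describe the left kernel. Let $\widehat{G}$ be the group of genus characters, i.e.\ $\operatorname{Hom}(\mathcal{C}(-n)/2\mathcal{C}(-n),\pm1)$, which is spanned by the $\chi_p$. The previous step shows that the characters $\chi_D$ with $D\in V_0'$ vanish on the image of $V$ in $\mathcal{C}/2\mathcal{C}$. Conversely, a product $\chi_D$ that vanishes on that image must have $D\in V_0'$, again by reciprocity. A dimension count ($\dim$ of the annihilator equals $t-\dim(\text{image of }V)=\dim V_0$) then identifies $\{\chi_D: D\in V_0'\}$ with the full annihilator of $(V+2\mathcal{C})/2\mathcal{C}$. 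By duality, $\mathfrak{b}\in V+2\mathcal{C}(-n)$ if and only if $\chi_D(\mathfrak{b})=1$ for all $D\in V_0'$.

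Hence $\mathfrak{a}=\mathfrak{b}^2$ lies in the left kernel if and only if $\mathfrak{b}=\mathfrak{c}^2\mathfrak{v}$ with $\mathfrak{v}\in V$, and that holds if and only if $\mathfrak{a}=\mathfrak{c}^4\in 4\mathcal{C}(-n)$. Therefore the left kernel is exactly $V_0\cap 4\mathcal{C}(-n)=\mathcal{C}(-n)[2]\cap 4\mathcal{C}(-n)$, whose dimension is $r_8(-n)$.

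I expect the main obstacle to be the reciprocity step: verifying, with the explicit Hilbert symbols at $2$ and at $q$, that $\chi_D(\mathfrak{p}_i)=\chi_{p_i}(\psi(D))$ for the discriminant $-4n$ or $-n$ as appropriate. This is where the congruences $p_i\equiv1\pmod 8$ and $q\equiv3\pmod8$ must be used to kill the extra local factors.
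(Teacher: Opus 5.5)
Your proof is correct. The paper gives no argument of its own here; it simply quotes \cite[Proposition 2.7]{lu}. What you have written is the standard R\'edei duality argument behind that result, so you have made self-contained what the paper outsources. The chain is:
\begin{itemize}
\item $r_8(-n)=\dim(\mathcal{C}(-n)[2]\cap 4\mathcal{C}(-n))$.
\item For $\mathfrak a=\mathfrak b^2\in V_0$, one has $\mathfrak a\in 4\mathcal{C}(-n)$ iff $\mathfrak b\in V+2\mathcal{C}(-n)$.
\item By genus theory plus reciprocity, the annihilator of the image of $V$ in the dual of $\mathcal{C}(-n)/2\mathcal{C}(-n)$ is exactly $\{\chi_D : D\in V_0'\}$.
\item Hence the left kernel of $\mathcal{B}$ is $\mathcal{C}(-n)[2]\cap 4\mathcal{C}(-n)$, and rank--nullity finishes.
\end{itemize}

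The step you single out as the main obstacle is actually routine. Since $n\equiv 3\pmod 8$, the discriminant is $-n$. Because $q\nmid D$, the identity $\chi_D(\mathfrak p_i)=\chi_{p_i}(\psi(D))$ holds factor by factor:
\begin{itemize}
\item the factor $\legendre{p_i,-n}{p_i}$, present iff $p_i\mid D$, occurs on both sides;
\item for $j\neq i$, quadratic reciprocity gives $\legendre{p_i,-n}{p_j}=\legendre{p_i}{p_j}=\legendre{p_j}{p_i}=\legendre{p_j,-n}{p_i}$.
\end{itemize}
So no local symbols at $2$ or $q$ ever appear, and nothing about $\legendre{q}{p_i}$ is used.

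Two small points deserve an explicit line. First, in the converse direction, vanishing of $\chi_D$ on the $\mathfrak p_i$ only yields $\chi_{p_i}(\psi(D))=1$ for $i\le t$. You need the genus relation $\prod_{p\mid n}\chi_p=1$ to get $\chi_q(\psi(D))=1$ as well, and hence $D\in V_0'$. Second, your dimension count uses that $D\mapsto\chi_D$ is injective on $V'$. That is precisely the fact that this product relation is the only relation among the genus characters.
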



	
	

  Now we provide a condition for computing the 8-rank $r_{8}(-n)$. Before stating it, consider the following: Let  $l$ be a prime of the form $4l'+1$, and let $k$ be any integer that is a quadratic residue modulo $l$. We denote the quartic symbol by $\legendre[4]{k}{l}$, which takes the value $\pm 1$, with a positive sign if and only if $k$  is a quartic residue modulo $l$.
\begin{lemma}\label{r4r8n}
Let $n$ be a number satisfying the properties of Theorem \ref{thmPq}. Then $r_8(-n)=1$ if and only if $\legendre[4]{q}{n_q}=1$. 
	\end{lemma}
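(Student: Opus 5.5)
Since $r_4(-n)=1$ by Remark \ref{r41}, Proposition \ref{rebe4rk8rk} gives $r_8(-n)=1-\operatorname{rank}_{\mathbb{F}_2}\mathcal{B}$. Here $\mathcal{B}$ is a bilinear form on the one-dimensional space $V_0$, so $r_8(-n)=1$ exactly when $\mathcal{B}(\mathfrak{a},D)=0$, where $\mathfrak{a}$ generates $V_0$ and $D\in V_0'$ is its preimage under $\psi$. The plan is therefore to identify $V_0$ explicitly and then evaluate the single Hilbert-symbol product defining $\mathcal{B}$.

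First I will show that $V_0$ is spanned by the class of the ideal $\mathfrak{p}_1\cdots\mathfrak{p}_t$ above $n_q$, i.e.\ $D=n_q$. By the Remark after \eqref{quadratic character}, it suffices to check that every character $\chi_p$ kills $N\mathfrak{a}=n_q$, i.e.\ $\legendre{n_q,-n}{p}=1$ for all $p\mid n$. For $p=p_j$, the symbol reduces to $\prod_{i}\legendre{p_i,-n}{p_j}$, which is trivial because the rows of $A_n$ sum to zero (each diagonal entry equals its row sum). Here we also use hypothesis (i), that $\legendre{q}{p_j}=1$, and $\legendre{-1}{p_j}=1$. For $p=q$, we get $\legendre{n_q}{q}=\prod\legendre{p_i}{q}=\prod\legendre{q}{p_i}=1$ by quadratic reciprocity and (i). Since $n_q\neq 1$ corresponds to a nonzero vector in $V'$ and $\dim V_0=1$, this gives $V_0'=\{1,n_q\}$.

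Next I need an ideal $\mathfrak{b}$ with $\mathfrak{b}^2\sim\mathfrak{a}$. I will find it through a representation by the principal form: since $\mathfrak{a}\in 2\mathcal{C}(-n)$, a standard R\'edei argument shows that the equation $n_q X^2+qY^2=Z^2$ (or a variant with a factor $4$, since $-n\equiv 1\pmod 4$) has a primitive solution. From such a solution, $\mathfrak{b}$ can be taken with $N\mathfrak{b}=Z$ up to squares. Then
\[
\mathcal{B}(\mathfrak{a},n_q)=\epsilon\Big(\prod_{p\mid n_q}\legendre{Z,-n}{p}\Big)=\epsilon\Big(\legendre{Z}{n_q}\Big),
\]
where $\legendre{\cdot}{n_q}$ is the Jacobi symbol. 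The task becomes computing $\legendre{Z}{n_q}$ in terms of $q$. Working modulo each $p_i$, we have $Z^2\equiv qY^2$, so $Z\equiv \pm\sqrt{q}\,Y$ and $\legendre{Z}{p_i}=\legendre[4]{q}{p_i}\legendre{Y}{p_i}$. It then remains to show $\prod_i\legendre{Y}{p_i}=1$. For this I will use reciprocity on $n_qX^2=(Z-\sqrt{\cdot})\ldots$, or factor $Z^2-qY^2=n_qX^2$ over the $p_i$ and apply Jacobi reciprocity to $\legendre{Y}{n_q}=\legendre{n_q}{Y}$ (valid since $n_q\equiv1\pmod 8$). We also have $\legendre{n_q}{Y}=\legendre{Z^2}{Y}\cdot(\text{unit})=1$ from the equation modulo primes dividing $Y$; the power of $2$ in $Y$ is harmless because $\legendre{2}{n_q}=1$. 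This yields $\legendre{Z}{n_q}=\legendre[4]{q}{n_q}$, which proves the equivalence.

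The main obstacle is the middle step: making the choice of $\mathfrak{b}$ precise (the parity and $2$-adic normalization of the solution $(X,Y,Z)$) and correctly tracking the signs and the factor $2$ so that the leftover Jacobi symbols cancel. I would first try the ternary form $n_qX^2+qY^2=Z^2$ with $\gcd(X,Y,Z)=1$. If the $2$-adic bookkeeping fails, I would switch to the analogous form with $4Z^2$ and take $N\mathfrak{b}$ to be the odd part of $Z$.
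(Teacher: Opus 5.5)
Your proposal is correct, but it evaluates the pairing by a different route than the paper. The first step is the same as the paper's: every $\chi_p$ kills $\mathfrak{p}_1\cdots\mathfrak{p}_t$, so $V_0'=\{1,n_q\}$.

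For the pairing itself, the paper simply invokes \cite[Th.~3.4]{lu}, a closed formula giving $\epsilon\bigl(\legendre[4]{-q}{n_q}\bigr)$. You instead compute $\mathcal{B}(\mathfrak{a},n_q)$ by hand, and your computation works:
\begin{itemize}
\item A primitive solution of $n_qX^2+qY^2=Z^2$ gives $\alpha=n_qX+Y\sqrt{-n}$ (divided by $2$ when $X,Y$ are both odd) with $(\alpha)=\mathfrak{p}_1\cdots\mathfrak{p}_t\,\mathfrak{c}^2$.
\item Hence $\overline{\mathfrak{c}}$ is a square root of the class, with norm $|Z|$ or $|Z|/2$.
\item Jacobi reciprocity then gives $\legendre{Z}{n_q}=\legendre[4]{q}{n_q}\legendre{Y}{n_q}=\legendre[4]{q}{n_q}$. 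This is exactly the manipulation the paper performs later, in case (c) of the main proof.
\end{itemize}

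The $2$-adic bookkeeping you worry about causes no trouble:
\begin{itemize}
\item A solution exists because $n_q$ is a norm from $\mathbb{Q}(\sqrt{-n})$, either from $\mathfrak{p}_1\cdots\mathfrak{p}_t\sim\mathfrak{b}^2$ or by Hasse's norm theorem.
\item Primitive solutions come in two parity types: $X$ odd, $4\mid Y$, $Z$ odd; or $X,Y$ odd, $Z\equiv 2\pmod 4$. Both are absorbed by $\legendre{2}{n_q}=1$.
\item Primitivity gives $\gcd(Z,n)=1$ and $\ell\nmid X$ for odd $\ell\mid Y$, which your reciprocity step needs.
\end{itemize}
So the fallback to the form with $4Z^2$ is unnecessary.

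Your route is self-contained and computes precisely what matters. On the one-dimensional $V_0$, the rank of $\mathcal{B}$ is decided by the single diagonal value $\mathcal{B}(\mathfrak{a},n_q)$, and you compute it outright. The paper instead passes through a quadratic form $Q_{\mathcal{B}}$. Taken literally, the polarization identity it writes forces $\mathcal{B}(D,D)=Q_{\mathcal{B}}(0)-2Q_{\mathcal{B}}(D)=0$, so its argument really rests on the cited theorem supplying the diagonal value. The paper's route is shorter and places the lemma inside Lu's general framework for $8$-ranks.
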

	
	\begin{proof}
        Since $r_4(-n) = 1$ (by Remark \ref{r41}), there are exactly two elements in $V_0$ as  $\dim_{\mathbb{F}_2}(V_0)=r_4(-n)=1$. Consider the following two ideals $$\mathfrak{P}_0=\mathfrak{q}\prod\limits_{i=1}^{t}\mathfrak{p}_i,\quad \mathfrak{P}_1=\prod\limits_{i=1}^{t}\mathfrak{p}_i,$$ where each $\mathfrak{q}, \mathfrak{p}_i$  are  prime ideals lying above $q$ and $p_i$ in $\Q(\sqrt{-n})$, respectively, for $1 \leq i \leq t$. From (\ref{quadratic character}), we have  $\chi_{p_i}(\mathfrak{P}_j)=\chi_q(\mathfrak{P}_j)=1$ for  all $1 \leq i \leq t$,  and  $j \in \{0,1\}$. Hence both $\mathfrak{P}_0, \mathfrak{P}_1\in V_0$. 
        From the relation in (\ref{nontrivialrelation}), we have $\mathfrak{P}_0= \mathfrak{q}\prod\limits_{i=1}^{t}\mathfrak{p}_i=1$, in other words, $\mathfrak{P}_0$ is the trivial element of $V_0$. Hence it follows that $\mathfrak{P}_1$ is the unique non-trivial element of $V_0$. Identifying $V$ with $V'$ as discussed above (\ref{isomV'V}), it is easy to see that $\mathfrak{P}_1$ corresponds to $n_q$ in $V'$. \smallskip

        First assume that $r_8(-n)=1$. By Proposition \ref{rebe4rk8rk}, $r_8(-n)=r_4(-n)- \text{rank}_{\mathbb{F}_2}\mathcal{B}$ where $\mathcal{B}$ is the bilinear form defined in (\ref{bilinearf}). Consider the quadratic form $Q_{\mathcal{B}}: V_0 \to \mathbb{F}_2$ such that $ \mathcal{B}(D_1,D_2)= Q_{\mathcal{B}}(D_1+D_2) - Q_{\mathcal{B}}(D_1) - Q_{\mathcal{B}}(D_2)$ (since $V_0 \cong V_0'$). 
        Since $r_8(-n)=r_4(-n)=1$, the bilinear form $\mathcal{B}$ is zero and hence $Q_\mathcal{B}=0$ on $V_0$. 
        Now using Theorem 3.4 of \cite{lu}, 
        we get $Q_\mathfrak{B}(n_q)= \epsilon \big(\legendre[4]{-n/n_q}{n_q}\big)=0$ i.e. $\legendre[4]{-q}{n_q}=1 \implies \legendre[4]{q}{n_q}=1$ since all the prime factors of  $n_q$ are congruent 1 modulo 8.\smallskip

		Next assume that $\legendre[4]{q}{n_q}=1$. Since $V_0$ has only two elements, using \cite[Th. 3.4]{lu} we conclude that the quadratic form $Q_{\mathcal{B}}=0$ on $V_0$ which implies that the bilinear form $\mathcal{B}=0$. Hence by Proposition \ref{rebe4rk8rk}, we have that $r_8(-n)=r_4(-n)=1$. 
  \end{proof}
        
        Now we write down a condition for the non-trivial $4$-rank and $8$-rank of the class group of $\Q(\sqrt{-n_q})$ where $n_q=\frac{n}{q}$. Since $n_q=P=p_1p_2\cdots p_t$ with $p_i \equiv 1 \pmod 8$, $n_q$ can be written as $n_q = 2e^2-f^2$ (with $e,f$ both are odd) as discussed in (\ref{expP}). 
        \begin{proposition}\cite[Theorem 1]{wy}\label{r4r8nq}
     Let $n=n_{q}q$ be defined as in Theorem \ref{thmPq}.
     Assume that $\operatorname{rank} A_n=t-1$, where $A_{n}$ is given as in (\ref{matrixA}). Then we have $r_4(-n_q)=1$. Moreover, $r_8(-n_q)=1$ if and only if $\legendre{-1}{e}=1$. 
        \end{proposition}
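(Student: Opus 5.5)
The plan is to follow R\'edei's method for $\Q(\sqrt{-n_q})$ exactly as the paper does for $\Q(\sqrt{-n})$. First I determine $V_0$ for $\mathcal{C}(-n_q)$ explicitly. Then I use the representation $n_q=2e^2-f^2$ from (\ref{expP}) to write the nontrivial element of $V_0$ as a square. Finally I evaluate the bilinear form $\mathcal{B}$ of Proposition \ref{rebe4rk8rk} on that element.

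\textbf{Step 1: the $4$-rank.} Since $n_q\equiv 1 \pmod 8$, the discriminant of $\Q(\sqrt{-n_q})$ is $-4n_q$, and its prime divisors are $2,p_1,\dots,p_t$. Hence $\mathcal{C}(-n_q)[2]$ has dimension $t$. It is generated by the ramified primes $\mathfrak{t}$ over $2$ and $\mathfrak{p}_i$ over $p_i$, subject to the single relation $\mathfrak{p}_1\cdots\mathfrak{p}_t=(\sqrt{-n_q})\sim 1$.

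I then compute the genus characters on these generators. For $j\neq i$, $\chi_{p_j}(\mathfrak{p}_i)=\legendre{p_i}{p_j}$. The diagonal value $\chi_{p_i}(\mathfrak{p}_i)$ is forced by the product formula to be $\prod_{j\neq i}\legendre{p_j}{p_i}$. Since every $p_i\equiv 1\pmod 8$, each character at $2$ is trivial, and $\chi_{p_j}(\mathfrak{t})=\legendre{2}{p_j}=1$. So the R\'edei matrix is $A_n$ bordered by a zero row and column for the prime $2$. Its kernel in the space of divisors of $2n_q$ is therefore $\{1,2,n_q,2n_q\}$, because $\operatorname{rank}A_n=t-1$ forces the kernel of $A_n$ to be $\{1,n_q\}$. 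Modulo the relation, $V_0=\{1,[\mathfrak{t}]\}$, so $r_4(-n_q)=1$.

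\textbf{Step 2: writing $[\mathfrak{t}]$ as a square.} From $f^2+n_q=2e^2$, the element $f+\sqrt{-n_q}$ has norm $2e^2$. First I check that $\gcd(e,f)=1$ and $\gcd(e,n_q)=1$; a common prime would divide $n_q$ and then also $f$ and $e$, which gives a contradiction. With coprimality, the principal ideal $(f+\sqrt{-n_q})$ is primitive and factors as $\mathfrak{t}\,\mathfrak{b}^2$ with $N\mathfrak{b}=e$. Hence $[\mathfrak{t}]=[\mathfrak{b}]^{-2}$, which exhibits explicitly the fractional ideal required in (\ref{bilinearf}).

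\textbf{Step 3: the $8$-rank.} Since $\dim V_0=1$, Proposition \ref{rebe4rk8rk} gives $r_8(-n_q)=1$ if and only if $\mathcal{B}([\mathfrak{t}],D)=0$, where $D\in\{2,2n_q\}$ represents $\mathfrak{t}$. Taking $D=2$, this value is $\epsilon\legendre{e,-n_q}{2}$. Because $-n_q\equiv 7\pmod 8$ and $e$ is odd and prime to $2$, the $2$-adic formula gives
\[
\legendre{e,-n_q}{2}=(-1)^{\frac{e-1}{2}\cdot\frac{-n_q-1}{2}}=(-1)^{\frac{e-1}{2}}=\legendre{-1}{e}.
\]
This yields the stated criterion. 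As a consistency check, the choice $D=2n_q$ should give the same value: the additional symbols $\legendre{e,-n_q}{p_i}$ multiply to $\legendre{e,-n_q}{2}$ by the product formula, since $e$ is prime to $n_q$ and the symbol is trivial at odd primes dividing $e$.

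\textbf{Main obstacle.} The delicate point is Step 2 together with the independence of choices. I must justify that the ideal factorization is exactly $\mathfrak{t}\mathfrak{b}^2$, with no stray ramified factors, and that the answer does not depend on the choice of $(e,f)$ or of $D$. If the direct ideal argument is awkward, I would fall back on \cite[Th.~3.4]{lu}, as in Lemma \ref{r4r8n}, and compute the quadratic form $Q_{\mathcal{B}}(2)$ via a quartic-type symbol. That route should again reduce to $\legendre{-1}{e}$, for example via Lemma \ref{rebeeandv}.
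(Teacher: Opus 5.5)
The paper does not prove this proposition; it quotes it from \cite[Theorem 1]{wy}. Your self-contained R\'edei computation is therefore an alternative argument, not a reconstruction of the paper's. Steps 1 and 2 are correct.

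The point you flagged as the main obstacle is harmless. The ideal $(f+\sqrt{-n_q})$ is primitive, because the coefficient of $\sqrt{-n_q}$ is $1$. Hence every odd $\ell\mid e$ splits with only one conjugate occurring, and $v_2(2e^2)=1$ contributes exactly $\mathfrak t$. Independence from the choice of $(e,f)$ is automatic, since the argument works for every representation. The genuine problem is in Step 3: you have misidentified what the second argument $D$ of $\mathcal B$ is.

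\textbf{The gap.} The second slot of R\'edei's pairing is not an element of $V_0$. It is a genus character $\chi$ that is trivial on $\mathcal C(-n_q)[2]$, and the pairing is $(\mathfrak b^2,\chi)\mapsto\chi(\mathfrak b)$. This is well defined exactly because $\chi$ kills $\mathcal C(-n_q)[2]$.

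For the odd discriminant $-n$ treated in the paper, the relation among ramified ideals ($\mathfrak p_1\cdots\mathfrak p_t\mathfrak q\sim1$) and the relation among genus characters ($\chi_q\prod_i\chi_{p_i}=1$) involve the same primes. That is why the paper can harmlessly take $D\in V_0'$.

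For the discriminant $-4n_q$ the two relations differ:
\begin{itemize}
\item on ideals, $2\sim 2n_q$;
\item on characters, $\chi_D=\chi_{2n_q/D}$, so $\chi_{2n_q}$ is trivial while $\chi_2=\chi_{n_q}$ is not.
\end{itemize}
Your own product-formula computation shows $\mathcal B([\mathfrak t],2n_q)=\epsilon\bigl(\legendre{e,-n_q}{2}^{2}\bigr)=0$ for every $n_q$. So the claimed consistency check is false. If $D=2n_q$ were a legitimate choice, as your phrase ``$D\in\{2,2n_q\}$ represents $\mathfrak t$'' allows, your argument would prove $r_8(-n_q)=1$ unconditionally. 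This is contradicted, for example, by $n_q=89\cdot137=2\cdot79^2-17^2$: there $79\equiv3\pmod 4$, and the paper's table gives $h(-n_q)=56$, so $r_8(-n_q)=0$.

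\textbf{The repair.} Step 1 already shows that $\chi_2$ is trivial on $\mathfrak t,\mathfrak p_1,\dots,\mathfrak p_t$. It is a nontrivial character, because the only relation among $\chi_2,\chi_{p_1},\dots,\chi_{p_t}$ is their total product. So the character side of the pairing is $\{1,\chi_2\}$, and the $1\times1$ pairing matrix is
\[
\chi_2(\mathfrak b)=\legendre{e,-n_q}{2}=\legendre{-1}{e},
\]
which gives the criterion. Your product-formula identity is really the correct consistency check that $D=2$ and $D=n_q$ agree.
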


\section{Proof of the main theorem}
\label{proofofthemaintheorem}
\noindent Now we are in the state of proving the main theorem which uses the results obtained in the previous sections.

	\begin{proof}[Proof of Theorem \ref{thmPq}]
            We observe that the discriminant $D(-n_q)$ of quadratic field of $\mathbb{Q}(\sqrt{-n_{q}})$ is $-4n_{q}$ since, $-n_{q} \equiv 3 \pmod{4}$ and hence $r_2(-n_q)=t$ (by Gauss genus theory). Since $r_4(-n_q)=1$, we have $h(-n_q) \equiv 0 \pmod{2^{t+1}}$ by Proposition \ref{r4r8nq}. Also from Proposition \ref{r4r8nq}, we have $h(-n_q) \equiv 0 \pmod{2^{t+2}}$ if and only if $\legendre{-1}{e}=1$. Using Lemma \ref{r4r8n}, we have $h(-n) \equiv 0 \pmod{2^{t+2}}$ if and only if $\legendre[4]{q}{n_q}=1$. Hence it is enough to show that if $n$ is a congruent number,  
            then $\legendre{-1}{e}= \legendre[4]{q}{n_q}$.  \smallskip

Suppose that $n$ is a congruent number. Hence $E_n(\Q)$ has a point of infinite order, or the Mordell-Weil rank $r_{n}$ of $E_{n}$ over $\Q$ is positive. By Lemmas \ref{expressK}, \ref{dequa_rank_Selmer} and  Remark \ref{rem_Sel_to_Mordell}  we can say that the system of equations (\ref{congeq}) has non-trivial integral solutions for at least one pair $(a, b) \in \{(1,n_q),(n_q,1),(n_q,n_q)\}$. By Proposition \ref{r4r8nq} and Lemma \ref{r4r8n}, it is enough to show that for each of the above cases, the following equivalent condition holds: $\legendre{-1}{e} = 1 \iff \legendre[4]{q}{n_{q}} = 1$.\smallskip

\noindent \textbf{(a)} Let $(a,b)=(1,n_q)$\label{(a)}. Substituting  $(a,b)=(1,n_q)$ and $m=n$ in (\ref{congeq}), we get 
    \begin{equation}
   	\begin{split}
   	n_qx^2+ny^2 & = z^2, \\
    n_qx^2-ny^2 & = n_qw^2.
    \end{split}
    \end{equation}
Note that $n_q\,|\,z$. Dividing the above equations by $n_q$ and denoting $\frac{z}{n_q}$ by $z$ again, we get 
\begin{equation}\label{congeq1nq1}
    x^2+qy^2=n_qz^2,
\end{equation}
\begin{equation}\label{congeq1nq2}
    x^2-qy^2=w^2.
\end{equation}
Adding the above equations, we obtain 
    \begin{equation}\label{congeq1nqadd}
    2x^2 = n_qz^2 + w^2.
    \end{equation} 
Observe that $x,z,w$ are odd and $2\,|\,y$. Considering (\ref{congeq1nq1}) modulo $n_q$, we note that $x^2\equiv -qy^2 \pmod{n_q}$. Hence $\legendre[4]{x^2}{n_q}=\legendre[4]{-q}{n_q}\legendre[4]{y^2}{n_q} \implies \legendre{x}{n_q}\legendre{y}{n_q}=\legendre[4]{q}{n_q}$ (since $\legendre[4]{-1}{n_q}=1$). 
Let $y = 2^{a_0}y_0$ with $y_0$ being an odd integer. From (\ref{congeq1nq1}), modulo  $y_0$, we have $\legendre{n_q}{y_0} = 1$, which implies $\legendre{y_0}{n_q} = 1$. Observe that $\legendre{2}{n_q} = 1$ because all prime divisors of $n_q$ are congruent to $1 \bmod 8$. Consequently, we have $\legendre{x}{n_q} = \legendre[4]{q}{n_q}$. Moreover,  (\ref{congeq1nqadd}) yields the congruence $n_q z^2 \equiv -w^2 \pmod{x}$, which implies that $\legendre{n_q}{x} = \legendre{-1}{x}$. Hence \begin{equation}\label{rebeqnqx}
\legendre[4]{q}{n_q}=\legendre{x}{n_{q}}=\legendre{n_{q}}{x}=\legendre{-1}{x}.
\end{equation}
We recall that $n_q$ can be expressed as $u^2+2v^2$ as discussed in (\ref{expP}). Substituting this into (\ref{congeq1nqadd}), we have $2(x+vz)(x-vz)=u^2z^2+w^2$. 
Assume that the $\gcd(u,w)=d >1$. Let $\ell$ be an odd prime such that $\ell~|~ u^2z^2+w^2$. If $\ell \, | \, d$, then $\ell^2 \, | \, (x+vz)(x-vz)$. Also $\gcd(x,z)=1$ implies that $\ell^2$ divides exactly one of $x+vz$ or $x-vz$. Note that  $\ell^2 \equiv 1 \pmod 4$. Let $\ell \nmid d$. Since $u^2z^2 \equiv -w^2 \pmod \ell$, it implies that $\ell\equiv 1 \pmod 4$. Hence $x\pm vz \equiv 1 \pmod 4$. We recall that $z$ is odd, therefore, $x \equiv 1 \pmod{4}$ if and only if $v \equiv 0 \pmod 4$. From Lemma \ref{rebeeandv}, we have $\legendre{-1}{e}=1$ if and only if $v \equiv 0 \pmod 4$. 
Hence from (\ref{rebeqnqx}), we have the following: 
\begin{enumerate}

    \item $\legendre[4]{q}{n_q}=1 \iff x\equiv 1 \pmod 4 \iff v \equiv 0 \pmod 4 \iff \legendre{-1}{e}=1$,

    \item $\legendre[4]{q}{n_q}=-1 \iff x\equiv 3 \pmod 4 \iff v \equiv 2 \pmod 4 \iff \legendre{-1}{e}=-1$.
\end{enumerate}\smallskip
Hence we conclude that in this case  $\legendre{-1}{e} = 1 \iff \legendre[4]{q}{n_{q}} = 1$. \\

\noindent \textbf{(b)} Let $(a,b)=(n_q,1)$. Substituting the value of $(a,b)$ in (\ref{congeq}), we observe that $n_q \,|\,w$. We divide both the equations by $n_q$ (and denote $\frac{w}{n_q}$ by $w$) which gives 
    \begin{equation}\label{congeqnq11}
        x^2+qy^2 = z^2,
    \end{equation}
    \begin{equation}\label{congeqnq12}
        x^2-qy^2 = n_qw^2.
    \end{equation}
Adding and subtracting the above equations, we get the following
\begin{equation}\label{congeqnq1add}
        2x^2 = z^2+ n_qw^2,
    \end{equation}
    \begin{equation}\label{congeqnq1sub}
        2qy^2 = z^2-n_qw^2.
    \end{equation}
Note that (\ref{congeqnq1add}) implies $x,z,w$ are odd and from (\ref{congeqnq1sub}), we get $y$ to be even. From (\ref{congeqnq12}) and by the similar argument used in the previous case, we obtain $\legendre{x}{n_q}=\legendre[4]{q}{n_q}$. Further from (\ref{congeqnq1add}) we have that $\legendre{n_q}{x}=\legendre{-1}{x}$ which implies that $\legendre[4]{q}{n_q}=\legendre{-1}{x}$. Using the fact that $n_q=u^2+2v^2$, where $u$ and $v$ are positive integers (as in (\ref{expP})), we get from (\ref{congeqnq1add}) that $2(x+vw)(x-vw)=z^2+u^2w^2$. It can be shown (using the similar argument used as in the case $(a,b)=(n_q,1)$) that 
$x\pm vw \equiv 1 \pmod 4$. Since $w$ is odd, we use Lemma \ref{rebeeandv} to conclude the following:
\begin{enumerate}
    \item $\legendre[4]{q}{n_q}=1 \iff x\equiv 1 \pmod 4$, then $v \equiv 0 \pmod 4$ i.e., $\legendre{-1}{e}=1$,

    \item $\legendre[4]{q}{n_q}=-1 \iff x\equiv 3 \pmod 4$, then $v \equiv 2 \pmod 4$ i.e., $\legendre{-1}{e}=-1$.
\end{enumerate}\smallskip

\noindent \textbf{(c)} Let $(a,b)=(n_q,n_q)$. As in the previous cases, we substitute the value of $(a,b)$ in (\ref{congeq}) and divide both the equations by $n_q$, 
which gives 
   \begin{equation}\label{congeqnqnq1}
   	n_qx^2+qy^2 = z^2, 
   \end{equation}
   \begin{equation}\label{congeqnqnqq}
        n_qx^2-qy^2 = w^2.
   \end{equation}
Adding the above two equations, we get 
\begin{equation}\label{congeqnqnqadd}
   	2n_qx^2 = z^2 + w^2.
   \end{equation}
Here, $x,z,w$ are odd and $y$ is even. From (\ref{congeqnqnq1}), we also have $\legendre[4]{q}{n_q}\legendre{y}{n_q}=\legendre{z}{n_q}$. Modulo $z$,  (\ref{congeqnqnqadd}) implies that $\legendre{n_q}{z}=\legendre{2}{z}$. Since $y$ is even, we write $y=2^{a_0}y_0$, with $2 \, \nmid \, y_0$. Hence from (\ref{congeqnqnq1}), we have $\legendre{n_q}{y_0}=1$. Using all these together with the fact that $\legendre{n_q}{y_0}=\legendre{y_0}{n_q}$, we obtain
\begin{equation} \label{rebeqnqz}
\begin{split}
 & \legendre[4]{q}{n_q}\legendre{y}{n_q}  =\legendre{z}{n_q}, \\ \implies  & \legendre[4]{q}{n_q} \legendre{y_0}{n_q}\legendre{2}{n_q}^{a_0}  =\legendre{2}{z}, \\ 
 \implies &  \legendre[4]{q}{n_q}=\legendre{2}{z}.
\end{split}
\end{equation}
Substituting $n_q=2e^2-f^2$ for some positive integers $e,f$ (as in (\ref{expP})), from (\ref{congeqnqnqadd}) we get $(2ex+z)(2ex-z)=w^2+2f^2x^2$. Since $f$, $w$, and $x$ are all odd, it follows that $w^2 + 2f^2x^2 \equiv 3 \pmod{8}$. Moreover, from equation (\ref{congeqnqnqadd}), we deduce that $\legendre{-1}{x} = 1$, which implies $x \equiv 1 \pmod{4}$. Let $\ell$ be an odd prime factor of $w^2+2f^2x^2$ such that $\ell  \nmid \gcd(w,f)$.  Then $\legendre{-2}{\ell}=1$ which implies that $\ell \equiv 1$ or, $3 \pmod 8$. If $\ell  \, | \, \gcd(w,f)$, then $\ell^2$ divides exactly one of  $2ex+z$, and $2ex-z$. \smallskip

Let $2ex+z = r_1^2l_1l_2\cdots l_s$ and $2ex-z=r_2^2l_{s+1}l_{s+2}\cdots l_k$ be the factorization of $2ex+z$ and $2ex-z$ respectively where $l_{i}$'s are primes, and $\gcd(l_i,r_j)=1$, for all $i \in \{1,2, \ldots, k\},$ and $j \in \{1,2\}$. Note that, for $j\in \{1,2\}$, we have $r_j^2 \equiv 1 \pmod 8$, and hence one of the following two conditions holds: 

\begin{enumerate}
    \item $2ex+z \equiv 1 \pmod 8$ and $2ex-z \equiv 3 \pmod 8$,

    \item $2ex+z \equiv 3 \pmod 8$ and $2ex-z \equiv 1 \pmod 8$.
\end{enumerate} \smallskip

Let $\legendre{2}{z}=1$ i.e., $z \equiv 1$ or, $7 \pmod 8$. If $z \equiv 1 \pmod 8$, then (2) holds and hence $e \equiv 1 \pmod 4 \implies \legendre{-1}{e}=1$. If $z \equiv 7 \pmod 8$, then (1) holds which implies that $\legendre{-1}{e}=1$. If $\legendre{2}{z}=-1$, then $z \equiv 3$ or, $5 \pmod 8$. Using the similar arguments as in the previous case, we get $\legendre{-1}{e}=-1$. Now using (\ref{rebeqnqz}), we have $\legendre[4]{q}{n_q} = \legendre{-1}{e}$.








		
	\end{proof} 
	
 Observe that if $t=1$, then $\operatorname{rank} A_n=0$ (see (\ref{matrixA})). Hence we have the following: 
	
\begin{corollary}\label{t=1}
    Let $n=pq$ be a congruent number with $(p,q) \equiv (1,3) \pmod 8$ and $\legendre{q}{p}=1$, then $h(-pq) \equiv h(-p) \pmod{8}$.
\end{corollary}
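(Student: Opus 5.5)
This is the case $t=1$ of Theorem \ref{thmPq}, so the plan is to check that the hypotheses of that theorem hold when $n=pq$ and then read off the conclusion. Set $p_1=p$, so $n_q=p$ and $t=1$.

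\textbf{Hypothesis (i).} Condition (i) of Theorem \ref{thmPq} reads $\legendre{q}{p}=1$ when $t=1$. This is exactly the assumption of the corollary.

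\textbf{Hypothesis (ii).} By the convention stated after (\ref{matrixA}), $A_n=[0]_{1\times 1}$ when $t=1$. Directly from the definition, the only entry is the diagonal one, $a_{1,1}=\sum_{j\neq 1}a_{1,j}$, which is an empty sum and hence $0$. So $\operatorname{rank}A_n=0=t-1$, and condition (ii) holds.

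\textbf{Conclusion.} Since $n=pq$ is assumed congruent, Theorem \ref{thmPq} gives $h(-pq)\equiv h(-p)\pmod{2^{t+2}}$. With $t=1$ we have $2^{t+2}=8$, which is the stated congruence.

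The only thing to verify is that the proof of Theorem \ref{thmPq} does not quietly require $t\ge 2$. The places to check are the following.
\begin{enumerate}
\item Proposition \ref{expK} should give $\operatorname{rank}M=2t=2$ and $s_n=2$.
\item Remark \ref{r41} uses $R_n=A_n$; for $t=1$ this gives $R_n=[0]$ and so $r_4(-n)=1$.
\item Proposition \ref{r4r8nq} should give $r_4(-p)=1$. This is consistent with genus theory: the discriminant of $\Q(\sqrt{-p})$ is $-4p$, so $r_2(-p)=1$, and the 4-rank is $1$ because $p\equiv 1\pmod 8$.
\end{enumerate}
All of these hold, so the corollary follows without further work.
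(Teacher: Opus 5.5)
Your proposal is correct and is exactly the paper's argument: the corollary is the case $t=1$ of Theorem \ref{thmPq}. Hypothesis (i) is the assumption $\legendre{q}{p}=1$, hypothesis (ii) holds because $A_n=[0]$ has rank $0=t-1$, and $2^{t+2}=8$. Your sanity checks that nothing in the theorem's proof needs $t\ge 2$ go beyond what the paper leaves implicit, and they are accurate: $\operatorname{rank}M=2$, $R_n=[0]$ so $r_4(-pq)=1$, and $r_4(-p)=1$.
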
	

\begin{remark}
 This shows that, for $t=1$, Theorem \ref{thmPq} coincides with Theorem 1.6 of \cite{qin} and generalises this result for  product of arbitrarily many primes $p_i \equiv 1 \pmod 8$.
\end{remark}



    
    

\section{Examples}\label{example}
In order to demonstrate our results, we find some congruent numbers and applying the results in Theorem \ref{thmPq}; we verify our computation by SageMath \cite{sage}.
\begin{enumerate}
    \item[1.] Table \ref{table:Pq-cong} and Table \ref{table:Pq-non-cong} correspond to Theorem \ref{thmPq}. We first search for all integers up to $500{,}000$ that satisfy the hypothesis of Theorem~\ref{thmPq} in the case $t=2$. We write the congruent numbers (implying that the congruence condition $h(-n) \equiv h(-n_{q}) \pmod{16}$ is satisfied) in Table \ref{table:Pq-cong}.

\item[2.] By checking whether the congruence condition $h(-n) \not \equiv h(-n_{q}) \pmod{16}$ is satisfied, we sort out the non-congruent numbers, which are listed in Table \ref{table:Pq-non-cong}. 
    
    \item[3.] In fact,  we observe that there exist some non-congruent numbers such that the congruence condition $h(-n) \equiv h(-n_{q}) \pmod{16}$ is satisfied. For example, $n=68547$, $110627$, $126363$, $167907$ and so on; this is verified by Sage \cite{sage}. This shows that  Theorem \ref{thmPq} provides a necessary condition for a number to be a congruent number which is not sufficient.


\end{enumerate}
\begin{table}[ht]
\begin{center}
\caption{Theorem \ref{thmPq}: Congruent numbers}\label{table:Pq-cong}  
\begin{tabular}{|l|l|l|l|l|l|}
\hline
$n$  & $q$ & $p_1\cdot p_2$   &  $\left( \legendre{q}{p_1}, \legendre{q}{p_2}, \legendre{p_{1}}{p_{2}} \right)$ &$h(-n)$ & $h(-p_1p_2)$\\
\hline
$52779$   & $3$ & $73 \cdot 241$  &  $(1,~\,1,~\,-1)$ & $80$  & $48$   \\ \hline
$134123$  & $11$ & $89 \cdot 137$    & $(1,~\,1,~\,-1)$  & $88$ & $56$   \\ \hline
$220971$  & $3$ & $73 \cdot 1009$     & $(1,~\,1,~\,-1)$  & $96$  & $128$   \\ \hline
$263019$  &  $3$ & $73 \cdot 1201$  & $(1,~\,1,~\,-1)$ & $104$ & 
$152$   \\ \hline
$264603$  &  $3$ & $193 \cdot 457$  & $(1,~\,1,~\,-1)$ & $104$ & $184$   \\ \hline
$274219$  &  $11$ & $97 \cdot 257$  & $(1,~\,1,~\,-1)$ & $88$ & $136$   \\ \hline
$289299$  &  $3$ & $73 \cdot 1321$   & $(1,~\,1,~\,-1)$  & $168$ & $152$  \\ \hline
$326091$  &  $3$ & $73 \cdot 1489$  &  $(1,~\,1,~\,-1)$ & $224$ &  $160$  \\ \hline
$417171$ &  $3$ & $241 \cdot 577$ & $(1,~\,1,~\,-1)$   & $264$ & $168$ \\ \hline
$462011$ &  $11$ & $97 \cdot 433$ & $(1,~\,1,~\,-1)$   & $256$ & $80$ \\ \hline
$468003$ &  $3$ & $73 \cdot 2137$ &  $(1,~\,1,~\,-1)$  & $152$ & $312$ \\ \hline
\end{tabular}
\end{center}
\end{table}


\begin{table}[ht]
\begin{center}
\caption{Theorem \ref{thmPq}: Non-congruent numbers}\label{table:Pq-non-cong}
\begin{tabular}{|l|l|l|l|l|l|}
\hline
$n$  & $q$ & $p_1\cdot p_2$   &  $\left( \legendre{q}{p_1}, \legendre{q}{p_2}, \legendre{p_{1}}{p_{2}} \right)$ &$h(-n)$ & $h(-p_1p_2)$\\
\hline
$42267$   & $3$ & $73 \cdot 193$  &  $(1,~\,1,~\,-1)$ & $24$  & $96$   \\ \hline
$73803$  & $3$ & $73 \cdot 337$   &  $(1,~\,1,~\,-1)$ & $32$ &   $120$ \\ \hline
$89571$  & $3$ & $73 \cdot 409$    & $(1,~\,1,~\,-1)$  & $80$ & $56$   \\ \hline
$94827$  & $3$ & $73 \cdot 433$     & $(1,~\,1,~\,-1)$  & $72$  & $96$   \\ \hline
$98067$  &  $3$ & $97 \cdot 337$  & $(1,~\,1,~\,-1)$ & $48$ & 
$104$   \\ \hline
$119019$  &  $3$ & $97 \cdot 409$  & $(1,~\,1,~\,-1)$ & $144$ & $88$   \\ \hline
$126003$  &  $3$ & $97 \cdot 433$  & $(1,~\,1,~\,-1)$ & $56$ & $80$   \\ \hline
$131619$  &  $3$ & $73 \cdot 601 $ &  $(1,~\,1,~\,-1)$ & $88$ & $64$  \\ \hline
$132987$  &  $3$ & $97 \cdot 457$   & $(1,~\,1,~\,-1)$  & $64$ & $168$  \\ \hline
$146179$  &  $11$ & $97 \cdot 137$  &  $(1,~\,1,~\,-1)$ & $80$ &  $168$  \\ \hline
\end{tabular}
\end{center}
\end{table}

\newpage	

\noindent{\it Acknowledgement}: The first author acknowledges support from the DST–INSPIRE Faculty Fellowship at IIT Kanpur. The second author acknowledges the support provided by the Institute Postdoctoral Fellowship at HRI Prayagraj, where the majority of the work has been conducted, and the support provided by the Institute Postdoctoral Fellowship at IIT Madras.

\nocite{*}
\bibliographystyle{siam}
\bibliography{Congruent_Class}

\end{document}